  \theoremstyle{plain}
  \newtheorem{theorem}{Theorem}[subsection]
  \newtheorem{proposition}[theorem]{Proposition}
  \newtheorem{corollary}[theorem]{Corollary}
  \newtheorem{lemma}[theorem]{Lemma}
  \newtheorem{remark}[theorem]{Remark}
  \theoremstyle{definition}
  \newtheorem{definition}[theorem]{Definition}
  \newtheorem{example}[theorem]{Example}
    \tikzset{
        symbol/.style={%
            draw=none,
            every to/.append style={%
                edge node={node [sloped, allow upside down, auto=false]{$#1$}}}
        }
    }
    \tikzset{pb/.style={"\lrcorner", very near start}} 
	\tikzset{pf/.style={"\llcorner", very near start}} 
    \newcommand{\fm}[1]{\textbf{#1}} 
    \renewcommand{\sf}[1]{\textsf{#1}}
    \newcommand{\cat}[1]{\mathsf{#1}}
    \newcommand{\sSet}{\cat{sSet}}
    \newcommand{\Cat}{\cat{Cat}}
    \newcommand{\sCat}{\cat{sCat}}
    \newcommand{\RelCat}{\cat{RelCat}}
    \newcommand{\MonCat}{\cat{MonCat}}
    \newcommand{\Hom}{\cat{Hom}}
    \newcommand{\Ob}{\cat{Ob}}
    \newcommand{\coCart}{\cat{coCart}}
    \newcommand{\sMonCat}{\cat{Mon(sCat)}}
    \newcommand{\opFib}{\cat{opFib}}
    \newcommand{\Gr}{\cat{Gr}}
    \newcommand{\op}{\mathrm{op}}
    \newcommand{\CC}{\mathfrak{C}} 
    \newcommand{\N}{\mathrm{N}}
    \newcommand{\RR}{\mathbb{R}}
    \newcommand{\St}{\sf{St}}
    \newcommand{\Un}{\sf{Un}}
    \newcommand{\var}[1]{\mathcal{#1}}
    \newcommand{\C}{\var{C}} 
    \newcommand{\D}{\var{D}}
    \newcommand{\E}{\var{E}}   
    \newcommand{\F}{\var{F}} 
    \newcommand{\K}{\var{K}}
    \newcommand{\V}{\var{V}}
    \renewcommand{\O}{\var{O}}
    \newcommand{\xto}[1]{\xrightarrow{#1}}
    \newcommand{\uq}{\mathrm{u.q.}}
    \newcommand{\rev}{\mathsf{rev}}
    \newcommand{\1}{\mathbf{1}}
    \newcommand{\bead}[1]{{\langle #1 \rangle}}
\begin{document}

\title{The Operadic Nerve, Relative Nerve, \\ and the Grothendieck Construction }
\author{Jonathan Beardsley and Liang Ze Wong}
\maketitle

\begin{abstract}
	We relate the relative nerve $\N_f(\D)$ of a diagram of simplicial sets $f \colon \D \to \sSet$ with the Grothendieck construction $\Gr F$ of a simplicial functor $F \colon \D \to \sCat$ in the case where $f = \N F$.
	We further show that any strict monoidal simplicial category $\C$ gives rise to a functor $\C^\bullet \colon \Delta^\op \to \sCat$, and that the relative nerve of $\N \C^\bullet$ is the operadic nerve $\N^\otimes(\C)$.
	Finally, we show that all the above constructions commute with appropriately defined opposite functors.
\end{abstract}

\tableofcontents

\section{Introduction}
	Given a simplicial colored operad $\O$, \cite{ha}*{2.1.1} introduces the \emph{operadic nerve} $\N^\otimes(\O)$ to be the nerve of a certain simplicial category $\O^\otimes$.
	This has a canonical fibration $\N^\otimes(\O) \to \N(\F in_*)$ to the nerve of the category of finite pointed sets which describes the $\infty$-operad associated to $\O$. 

	A special case of the above arises when one attempts to produce the \textit{underlying monoidal $\infty$-category} of a simplicial monoidal category $\C$. Following the constructions of \cite{dag2}*{1.6} and \cite{ha}*{4.1.7.17}, one first forms a simplicial category $\C^\otimes$ from a monoidal simplicial category $\C$, then takes its nerve to get $\N^\otimes(\C) := \N(\C^\otimes)$. We call this the \emph{operadic nerve of $\C$}, where the monoidal structure of $\C$ will always be clear from context. To be more precise, we should call this construction the operadic nerve of the underlying non-symmetric simplicial colored operad, or simplicial multicategory, of $\C$, but for ease of reading we do not. The above construction ensures that there is a canonical \textit{coCartesian} fibration $\N^\otimes(\C) \to \N(\Delta^\op)$, which imbues $\N(\C)$ with the structure of a \emph{monoidal $\infty$-category} in the sense of \cite{dag2}*{1.1.2}. Given that \cite{dag2} exists only in preprint form, we also refer the reader to \cite{gephaugenriched}*{\S 3.1} for a published (and more general than we will need) account of the operadic nerve of a simplicial multicategory. 

	Our paper is motivated by the following:
	if $\C$ is a monoidal fibrant simplicial category, then so is its opposite $\C^\op$. 
	We thus get a monoidal $\infty$-category $\N^\otimes(\C^\op)$.
	However, we could also have started with $\N^\otimes(\C)$ and arrived at another monoidal $\infty$-category $\N^\otimes(\C)_\op$ by taking `fiberwise opposites'. 

	We show that $\N^\otimes(\C^\op)$ and $\N^\otimes(\C)_\op$ are equivalent in the $\infty$-category of monoidal $\infty$-categories i.e.~that \emph{taking the operadic nerve of a simplicial monoidal category commutes with taking opposites} (Theorem \ref{thm:opcommute}). This follows from a more general statement about the relationship between the simplicial nerve functor, the enriched Grothendieck construction of \cite{beardswong}, and taking opposites (Theorem \ref{thm:F-op-commute}). In the process of proving the above, we also give a simplified description of the somewhat complicated \textit{relative nerve} of \cite{htt} (Theorem \ref{thm:gr-rel-nerve}) that we hope will be useful to others. 
	
	One corollary of our Theorem \ref{thm:opcommute} is the fact that  \textit{coalgebras} in the monoidal quasicategory $N^\otimes(\C)$ can be identified with the nerve of the simplicial category of \textit{strict} coalgebras in $\C$ itself, and that this relationship lifts to categories of comodules over coalgebras as well (this corollary and its implications are left to future work). There is well developed machinery in \cite{ha} for passing algebras and their modules from simplicial categories to their underlying quasicategories, but this machinery fails to work for coalgebras and comodules. As such, it is our hope that the work contained herein may lead, in the long run, to a better understanding of \textit{derived coalgebra}. 


	\subsection{Outline}

	We begin in a more general context:~in \S \ref{sec:rel-nerve-gr}, we review the relative nerve $\N_f(\D)$ of a functor $f \colon \D \to \sSet$ and the Grothendieck construction $\Gr F$ of a functor $F \colon \D \to \sCat$. 
	We show that when $F$ takes values in locally Kan simplicial categories, so that the composite $f \colon \D \xrightarrow{F} \sCat \xrightarrow{\N} \sSet$ takes values in quasicategories, we have an isomorphism associated to a commutative diagram: 
	\[ 
		\begin{aligned} \N(\Gr F) \cong \N_f(\D), \end{aligned}
		\quad \quad \quad \quad 
		\begin{aligned} 
			\begin{tikzcd} 
				\sCat^\D \ar[r, "\N \circ -"] \ar[d, "\Gr"'] & \sSet^\D \ar[d, "\N_{(-)}(\D)"]
				\\ 
				\opFib_{/\D}\ar[r, "\N"] & \coCart_{/\N(\D)}. 
			\end{tikzcd} 
		\end{aligned} 
	\]
	The relative nerve is itself equivalent to the $\infty$-categorical Grothendieck construction $\Gr_\infty \colon (\Cat_\infty)^{\N(\D)} \to \coCart_{/\N(\D)}$, yielding an equivalence of coCartesian fibrations
	\[
		\N(\Gr F) \simeq \Gr_\infty(\N(f)).
	\]

	In \S \ref{sec:monoid-struct}, we show that a strict monoidal simplicial category $\C$ gives rise to a functor $\C^\bullet \colon \Delta^\op \to \sCat$ whose value at $[n]$ is $\C^{n}$.
	We show that $\Gr\, \C^\bullet \cong \C^\otimes$, and thus that the operadic nerve $\N^\otimes(\C) := \N(\C^\otimes)$ factors as:
	\[
		\begin{tikzcd}
			\sMonCat \ar[r, "(-)^\bullet"] \ar[rr, bend right = 15, "(-)^\otimes"']  & \sCat^{\Delta^\op} \ar[r, "\Gr"] & \opFib_{/\Delta^\op} \ar[r, "\N"] & \coCart_{/\N(\Delta^\op)}
		\end{tikzcd}
	\]

	In \S \ref{sec:op-func}, we show that the above constructions interact well with taking  opposites, in that the following diagram `commutes:'
	\[
		\begin{tikzcd}[row sep = large]
			\sMonCat \ar[r, "(-)^\bullet"] \ar[d, "\op" description] & \sCat^{\Delta^\op} \ar[r, "\Gr"] \ar[d, "\op" description] & \opFib_{/\Delta^\op} \ar[r, "\N"]  
			& \coCart_{/\N(\Delta^\op)} \ar[d, "\op" description]
			\\
			\sMonCat \ar[r, "(-)^\bullet"] & \sCat^{\Delta^\op} \ar[r, "\Gr"] & \opFib_{/\Delta^\op} \ar[r, "\N"] & \coCart_{/\N(\Delta^\op)}		
		\end{tikzcd}
	\]
	We write `commutes' because we only check it \emph{on objects}, and only \emph{up to equivalence} in the quasicategory $\coCart_{/\N(\Delta^\op)}$.
	We conclude that $\N^\otimes(\C^\op)$ and the fiberwise opposite $\N^\otimes(\C)_\op$ are equivalent in the $\infty$-category of monoidal $\infty$-categories.

\section{Notation}

In large part, our notation follows that of Lurie's seminal works in higher category theory \cite{ha,htt}. However, here we point out certain notational conventions that may not be immediately obvious to the reader. Some of these conventions may be non-standard, but we adhere to them for the sake of precision.

\begin{enumerate}
	\item We will mostly avoid using the term ``$\infty$-category'' in any situation where a more precise term (e.g. quasicategory or simplicially enriched category) is applicable. We make one exception when we discuss the ``$\infty$-categorical'' Grothendieck construction of \cite{htt}.
	\item A special class of simplicially enriched categories are those in which all mapping objects are not just simplicial sets, but Kan complexes. We will refer to a simplicially enriched category with this property as ``locally Kan.'' 
	\item We will often use the term ``simplicial category'' to refer to a simplicially enriched category. There is no chance for confusion here because at no point do we consider simplicial object in the category of categories.
\end{enumerate}

\section{The relative nerve and the Grothendieck construction} 
\label{sec:rel-nerve-gr}
	The $\infty$-categorical Grothendieck construction is the equivalence
	\[
		\Gr_\infty \colon (\Cat_\infty)^S \xrightarrow{\quad \simeq \quad} \coCart_{/S}
	\]
	induced by the unstraightening functor $\Un^+_S \colon (\sSet^+)^{\CC[S]} \to (\sSet^+)_{/S}$ of \cite{htt}*{3.2.1.6}.
	Here, $\Cat_\infty$ is the quasicategory of small quasicategories, and $\coCart_{/S}$ is the quasicategory of coCartesian fibrations over $S \in \sSet$, and these are defined as nerves of certain simplicial categories.
	(See \ref{sec:models} and \ref{sec:st-un}, or \cite{htt}*{Ch. 3} for details.)

	In general, it is not easy to describe $\Gr_\infty \varphi$ for an arbitrary morphism $\varphi \colon S \to \Cat_\infty$.
	However, when $S$ is the nerve of a small category $\D$, and $\varphi$ is the nerve of a functor $f \colon \D \to \sSet$ such that each $fd$ is a quasicategory, the \emph{relative nerve} $\N_f(\D)$ of \cite{htt}*{3.2.5.2} yields a coCartesian fibration equivalent to $\Gr_\infty \N(f)$.

	If $f$ further factors as $\D \xrightarrow{F} \sCat \xrightarrow{\N} \sSet$, where each $Fd$ is a locally Kan simplicial category, we may instead form the simplicially-enriched Grothendieck construction $\Gr F$ and take its nerve.
	The purpose of this section is to show that we have an isomorphism of coCartesian fibrations
	\[
		\N(\Gr F) \cong \N_f(\D),
	\]
	thus yielding an alternative description of $\Gr_\infty \N(f)$.

	\subsection{The relative nerve $\N_f(\D)$}

	\begin{definition}[\cite{htt}*{3.2.5.2}]\label{def:relnerve}
		Let $\D$ be a category, and $f \colon \D \to \sSet$ a functor.
		The \fm{nerve of $\D$ relative to $f$} is the simplicial set $\N_f(\D)$ whose $n$-simplices are sets consisting of:
		\begin{enumerate}[label = (\roman*)]
			\item a functor $d \colon [n] \to \D$; write $d_i$ for $d(i)$ and $d_{ij} \colon d_i \to d_j$ for the image of the unique map $i \leq j$ in $[n]$,
			\item for every nonempty subposet $J \subseteq [n]$ with maximal element $j$, a map $s^J \colon \Delta^J \to fd_j$,
			\item such that for nonempty subsets $I \subseteq J \subseteq [n]$ with respective maximal elements $i \leq j$, the following diagram commutes:
			\begin{equation} 
			\label{eq:rel-nerve}
				\begin{tikzcd}
					\Delta^I \ar[r, "s^I"]  \ar[d, hookrightarrow] & fd_i \ar[d, "f d_{ij}"]
					\\
					\Delta^J \ar[r, "s^J"] & fd_j
				\end{tikzcd}
			\end{equation}
		\end{enumerate}
	\end{definition}

	For any $f$, there is a canonical map $p \colon \N_f(\D) \to \N(\D)$ down to the ordinary nerve of $\D$, induced by the unique map to the terminal object $\Delta^0 \in \sSet$ \cite{htt}*{3.2.5.4}.
	When $f$ takes values in quasicategories, this canonical map is a coCartesian fibration \emph{classified} (Definition \ref{def:classified}) by $\N(f)$:

	\begin{proposition}[\cite{htt}*{3.2.5.21}]
		\label{prop:rel-nerve-infty-gr}
		Let $f \colon \D \to \sSet$ be a functor such that each $fd$ is a quasicategory.
		Then:
		\begin{enumerate}[label = (\roman*)]
			\item $p \colon \N_f(\D) \to \N(\D)$ is a coCartesian fibration of simplicial sets, and
			\item $p$ is classified by the functor $\N(f) \colon \N(\D) \to \Cat_\infty$, i.e.~there is an equivalence of coCartesian fibrations
			 \[
			 	\N_f(\D) \simeq \Gr_\infty \N(f).
			 \]
	 	\end{enumerate}
	\end{proposition}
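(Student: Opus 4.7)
The plan is to prove the two parts separately; part (i) should be routine horn-filling using the quasicategory hypothesis on each $fd$, while part (ii), the classification, is the main obstacle.

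For (i), to verify that $p$ is an inner fibration, I would fix an inner horn $\Lambda^n_k \hookrightarrow \Delta^n$ (with $0 < k < n$), a map $\sigma_0 \colon \Lambda^n_k \to \N_f(\D)$, and a compatible $\tau \colon \Delta^n \to \N(\D)$. Inspection of Definition \ref{def:relnerve} shows that the only data in an extension $\Delta^n \to \N_f(\D)$ not already supplied by $\sigma_0$ are $s^{[n]}$ and $s^{[n]\setminus\{k\}}$. Condition (iii) determines every face of $s^{[n]}$ other than the $k$-th as a composite of horn data with the structure maps $fd_{in}$; these faces assemble to an inner horn $\Lambda^n_k \to fd_n$, and filling it using the quasicategory hypothesis yields $s^{[n]}$ together with its $k$-th face $s^{[n]\setminus\{k\}}$. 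For coCartesian lifts, given an edge $e \colon c \to d$ in $\D$ and a vertex $x$ of $fc$, the candidate lift is the $1$-simplex of $\N_f(\D)$ with $s^{\{0\}} = x$, $s^{\{1\}} = (fe)(x)$, and $s^{\{0,1\}}$ the degeneracy at $(fe)(x)$; verifying the coCartesian property reduces by the same analysis to filling horns $\Lambda^n_0 \to fd_n$ whose initial edge is degenerate, which is possible in any quasicategory.

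For (ii), the strategy is to exhibit $\N_f(\D)$ as a model for the unstraightening of $\N(f)$. A direct check of Definition \ref{def:relnerve} identifies the fiber $(\N_f(\D))_d$ canonically with $fd$: a simplex of the fiber has $d_i = d$ and $d_{ij} = \mathrm{id}_d$ throughout, and condition (iii) forces all $s^J$ to be restrictions of a single top simplex $s^{[n]} \colon \Delta^n \to fd$. Since the unstraightening of $\N(f)$ also has fiber $fd$ over $d$, producing a map of coCartesian fibrations over $\N(\D)$ that induces the identity on fibers would, by the straightening--unstraightening Quillen equivalence, establish the claim. The main obstacle is constructing such a map canonically and compatibly with the $\CC[\N(\D)]$-action implicit in the straightening; this is the content of the proof in \cite{htt}*{3.2.5.21} and amounts to a careful simplex-by-simplex comparison of the coend formula for $\Un^+_S$ with Definition \ref{def:relnerve}. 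An alternative I would pursue is to identify $\N_f(\D)$ with the simplicial nerve of a rectification of $f$, reducing to known comparisons between the simplicial nerve and the unstraightening; since this is essentially the perspective developed in \S\ref{sec:rel-nerve-gr}, some care is needed to avoid circularity.
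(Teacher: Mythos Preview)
The paper does not supply its own proof of this proposition; it is cited directly from \cite{htt}*{3.2.5.21}, and the only commentary the paper adds is the Remark immediately following, which explains that the argument in \cite{htt} actually establishes the \emph{marked} statement $\N_f(\D)^\natural \simeq \N^+_{f^\natural}(\D) \simeq \Un_\phi^+ f^\natural$ and then passes to underlying quasicategories. So there is no in-paper argument to compare against beyond that Remark.

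That said, your sketch is broadly reasonable but diverges from what the Remark reports. For (i), your direct horn-filling analysis is essentially the standard route and is fine, though you should be a bit more careful: after filling the inner horn in $fd_n$ to obtain $s^{[n]}$, you must check that the newly produced $k$-th face $s^{[n]\setminus\{k\}}$ satisfies \emph{all} instances of condition (iii) against the smaller $s^I$ already in hand, not just the one coming from $J=[n]$. For (ii), you correctly isolate the main obstacle as producing a comparison map to the unstraightening, but your proposed mechanism (``map inducing the identity on fibers'') is not by itself a proof strategy: one still has to build that map, and the paper's Remark indicates that in \cite{htt} this is done at the level of \emph{marked} simplicial sets, via the marked relative nerve $\N^+_{f^\natural}$, rather than by a simplex-by-simplex comparison of the coend formula. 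Your proposed alternative, rectifying $f$ through $\sCat$ and invoking \S\ref{sec:rel-nerve-gr}, does not apply at the generality of the proposition: $f$ is only assumed to land in quasicategories, not in nerves of simplicial categories, so Theorem~\ref{thm:gr-rel-nerve} is unavailable without further replacement arguments.
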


\begin{remark}
	Note that the version of Proposition \ref{prop:rel-nerve-infty-gr} in \cite{htt}  is somewhat ambiguously stated. In particular, it is claimed that, given a functor $f\colon\D\to \sSet$, the fibration $\N_{f}(\D)$ is the one \textit{associated} to the functor $\N(f)\colon N(\D)\to \Cat_\infty$. However, a close reading of the proof given in \cite{htt} makes it clear that, for a functor $f\colon\D\to \sSet$ with associated $f^\natural\colon\D\to \sSet^+$, there is an equivalence $\N_f(\D)^\natural\simeq \N_{f^\natural}^+(\D)\simeq \Un_\phi^+f^\natural$.
	Here, $\N^+_{f^\natural}$ indicates the \textit{marked} analog of the relative nerve described in Definition \ref{def:relnerve}. Application of the (large) simplicial nerve functor recovers the form of the proposition given above.
\end{remark}

	\subsection{The Grothendieck construction $\Gr F$}

	Suppose instead that we have a functor $F \colon \D \to \sCat$. 
	We may then take the nerve relative to the composite $f \colon \D \xrightarrow{F} \sCat \xrightarrow{\N} \sSet$ to get a coCartesian fibration $\N_{f}(\D) \to \N(\D)$.
	We now describe a second way to obtain a coCartesian fibration over $\N(\D)$ from such an $F$.

	\begin{definition}[\cite{beardswong}*{Definition 4.4}]
		Let $\D$ be a small category, and let $F \colon \D \to \sCat$ be a functor.
		The \fm{Grothendieck construction of $F$} is the simplicial category $\Gr F$ with objects and morphisms:
		\begin{align*}
			\Ob(\Gr F) &:= \coprod_{\;\;\, d \in \D \;\;\,} \Ob(Fd) \times \{d\}, \\
			\Gr F\big( (x,c), (y,d) \big) &:= \coprod_{\varphi \colon c \to d} Fd(F\varphi\; x, y) \times \{\varphi\} .
		\end{align*}
	An arrow $(x,c) \to (y,d)$ (i.e.~a $0$-simplex in $\Gr F( (x,c), (y,d))$) is a pair $\left( F\varphi\;x \xrightarrow{\sigma} y, c \xrightarrow{\varphi} d \right)$, while
	the composite $(x,c) \xrightarrow{(\sigma, \varphi)} (y,d) \xrightarrow{(\tau, \psi)} (z,e)$ is 
	\[
		\bigg( F(\psi \varphi)\, x = 
		F\psi\, F\varphi\, x\xto{F\psi\, \sigma} F\psi\, y \xto{\tau} z
			\;,\;\;
		c \xto{\varphi} d \xto{\psi} e
		\bigg).
	\]		
	\end{definition}

	There is a simplicial functor $P \colon \Gr F \to \D,\; (x,c) \mapsto c,$
	induced by the unique maps $Fd(F\varphi\; x, y) \to \Delta^0$. 
	Here, $\D$ is treated as a \emph{discrete} simplicial category with hom-objects
	\[
		\D(c,d) = \coprod_{\varphi \colon c \to d} \Delta^0 \times \{\varphi\}.
	\]

	\begin{definition}[\cite{beardswong}*{Definition 3.5, Proposition 3.6}]
		Let $P \colon \E \to \D$ be a simplicial functor.
		A map $\chi \colon e \to e'$ in $\E$ is \fm{$P$-coCartesian} if 
		\begin{equation} \label{eq:opfib-pullback}
			\begin{tikzcd}[column sep = large]
				\E(e',x) \ar[r, "-\circ \chi"] \ar[d, "P_{e'x}"'] & \E(e,x) \ar[d, "P_{ex}"]
				\\
				\D(Pe', Px) \ar[r, "-\circ P\chi"] & \D(Pe, Px)
			\end{tikzcd}
		\end{equation}		
		is a (ordinary) pullback in $\sSet$ for every $x \in \E$.

		A simplicial functor $P \colon \E \to \D$ is a \fm{simplicial opfibration} if for every $e \in \E, d \in \D$ and $\varphi \colon Pe \to d$, there exists a $P$-coCartesian lift of $\varphi$ with domain $e$.
	\end{definition}

	\begin{proposition}[\cite{beardswong}*{Proposition 4.11}]\label{prop:bw411}
		The functor $\Gr F \to \D$ is a simplicial opfibration.
	\end{proposition}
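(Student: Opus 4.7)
The plan is to construct, for every object $(x,c) \in \Gr F$ and every morphism $\varphi \colon c \to d$ in $\D$, an explicit $P$-coCartesian lift of $\varphi$ with domain $(x,c)$, and then verify that it satisfies the pullback condition \eqref{eq:opfib-pullback}. The natural candidate is the morphism
\[
\chi := (\mathrm{id}_{F\varphi\,x}, \varphi) \colon (x,c) \longrightarrow (F\varphi\,x, d),
\]
whose image under $P$ is $\varphi$.

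To show $\chi$ is $P$-coCartesian, I would fix an arbitrary $(y,e) \in \Gr F$ and analyze the square
\[
\begin{tikzcd}[column sep = large]
\Gr F((F\varphi\,x, d), (y,e)) \ar[r, "-\circ \chi"] \ar[d] & \Gr F((x,c), (y,e)) \ar[d]
\\
\D(d, e) \ar[r, "-\circ \varphi"] & \D(c, e)
\end{tikzcd}
\]
using the coproduct decompositions of the hom-objects in $\Gr F$ and in the discrete simplicial category $\D$. Unpacking the composition law in $\Gr F$, the top map $-\circ\chi$ sends a simplex $(\tau,\psi)$ with $\tau \colon F\psi\,F\varphi\,x \to y$ to $(\tau \circ F\psi\,\mathrm{id}, \psi\varphi) = (\tau, \psi\varphi)$, while the bottom map $-\circ\varphi$ sends $\psi$ to $\psi\varphi$. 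Thus both horizontal maps respect the coproduct indexing via the reassignment $\psi \mapsto \psi\varphi$.

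The key point is that by strict functoriality of $F$ we have $F(\psi\varphi) = F\psi \circ F\varphi$ as simplicial functors, whence $Fe(F(\psi\varphi)\,x, y) = Fe(F\psi\,F\varphi\,x, y)$ on the nose. Hence the summand of the top left indexed by $\psi$ is literally identical to the summand of the top right indexed by $\psi\varphi$, and the square reduces, summand-wise in $\psi$, to
\[
\begin{tikzcd}
Fe(F\psi\,F\varphi\,x, y) \ar[r, "="] \ar[d] & Fe(F(\psi\varphi)\,x, y) \ar[d]
\\
\Delta^0 \ar[r, "="] & \Delta^0
\end{tikzcd}
\]
which is trivially a pullback in $\sSet$. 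Since pullbacks commute with coproducts in $\sSet$, reassembling these summand-wise pullbacks recovers the full square as a pullback, so $\chi$ is $P$-coCartesian.

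The main obstacle is purely notational bookkeeping of the coproduct decomposition of the hom-objects; once this is sorted out, the coCartesian condition collapses to the strict functoriality identity $F(\psi\varphi) = F\psi \circ F\varphi$ and the triviality that each summand projects to $\Delta^0$. Since the lift $\chi$ was produced for arbitrary $(x,c)$ and $\varphi$, existence of coCartesian lifts is automatic, establishing that $P \colon \Gr F \to \D$ is a simplicial opfibration.
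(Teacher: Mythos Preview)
Your argument is correct: the lift $\chi = (\mathrm{id}_{F\varphi\,x},\varphi)$ is the canonical choice, and the pullback condition reduces, via the coproduct decomposition of $\Gr F$-homs and strict functoriality $F(\psi\varphi) = F\psi\,F\varphi$, to an identity on each summand. One small point of phrasing: the square is not literally a coproduct of squares, since the right-hand column is indexed by $\D(c,e)$ rather than $\D(d,e)$; what you are really using is that pulling back along the map of discrete simplicial sets $-\circ\varphi \colon \D(d,e)\to\D(c,e)$ simply reindexes fibers, so the pullback $\Gr F((x,c),(y,e))\times_{\D(c,e)}\D(d,e)$ is $\coprod_{\psi} Fe(F(\psi\varphi)\,x,y)$, which is $\Gr F((F\varphi\,x,d),(y,e))$ on the nose. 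With that adjustment the argument is clean.

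As for comparison: the paper does not actually give a proof of this proposition; it merely cites \cite{beardswong}*{Proposition 4.11}. Your direct verification is exactly the expected argument and is presumably what appears in that reference.
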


	\begin{proposition}\label{prop:opfibtococart}
		Let $\D$ be a category (i.e.\ a discrete simplicial category), and $\E$ be a locally Kan simplicial category. 
		If $P \colon \E \to \D$ is a simplicial opfibration, then $\N(P) \colon \N(\E) \to \N(\D)$ is a coCartesian fibration.
	\end{proposition}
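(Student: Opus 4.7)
The plan is to verify the two requirements for $\N(P)$ to be a coCartesian fibration of simplicial sets: that $\N(P)$ is an inner fibration, and that every edge of $\N(\D)$ with a chosen lift of its source admits a coCartesian lift.

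For the inner fibration condition, I transpose across the adjunction $\CC \dashv \N$: the right lifting property of $\N(P)$ against $\Lambda^n_i \hookrightarrow \Delta^n$ (for $0 < i < n$) becomes the lifting property of $P$ against $\CC[\Lambda^n_i] \hookrightarrow \CC[\Delta^n]$ in $\sCat$. Because $\D$ is discrete, every map $\CC[\Delta^n] \to \D$ factors through $[n] \to \D$, so any extension is automatic on the $\D$-side and it suffices to extend the map on simplicial mapping spaces in $\E$. For inner horns, $\CC[\Lambda^n_i] \hookrightarrow \CC[\Delta^n]$ is an isomorphism on every mapping space except $(0,n)$, where the inclusion $\CC[\Lambda^n_i](0,n) \hookrightarrow (\Delta^1)^{n-1}$ is Kan-anodyne for $0 < i < n$. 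Since each $P_{e,e'} \colon \E(e,e') \to \D(Pe, Pe')$ is a Kan fibration --- the target is discrete and the domain is Kan --- the required extension exists.

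For existence of coCartesian lifts, given $e_0$ in $\N(\E)$ and an edge $\varphi \colon Pe_0 \to d$ in $\N(\D)$, the simplicial opfibration hypothesis furnishes a $P$-coCartesian $\chi \colon e_0 \to e_1$ with $P\chi = \varphi$. To check that $\chi$ is $\N(P)$-coCartesian as an edge of $\N(\E)$, I verify that every lifting problem
\[
\begin{tikzcd}
\Lambda^n_0 \ar[r] \ar[d, hookrightarrow] & \N(\E) \ar[d, "\N(P)"] \\
\Delta^n \ar[r] \ar[ur, dashed] & \N(\D)
\end{tikzcd}
\]
for $n \geq 2$, whose restriction to the $\{0,1\}$-edge is $\chi$, admits a solution. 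Transposing again, this becomes a lifting problem for $\CC[\Lambda^n_0] \hookrightarrow \CC[\Delta^n]$ against $P$. Here $\CC[\Lambda^n_0] \hookrightarrow \CC[\Delta^n]$ is an isomorphism on every mapping space \emph{except} $(0,n)$ and $(1,n)$, where the missing piece is the open star of the top vertex in the cubes $(\Delta^1)^{n-1}$ and $(\Delta^1)^{n-2}$, respectively. The pullback square (\ref{eq:opfib-pullback}) applied at $x = e_n$ identifies $\E(e_1, e_n)$ with $\E(e_0, e_n) \times_{\D(Pe_0, Pe_n)} \D(Pe_1, Pe_n)$; because the codomains of both legs into $\D(Pe_0, Pe_n)$ are discrete, this pullback exhibits each component of $\E(e_1, e_n)$ (over some $\psi \in \D(Pe_1, Pe_n)$) as the corresponding component of $\E(e_0, e_n)$ (over $\psi \circ P\chi$). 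Using this identification, the required extension of $\E(e_1, e_n)$ reduces to one of $\E(e_0, e_n)$, which is already prescribed by the $\Lambda^n_0$ data together with the $\D$-data.

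The main obstacle is the combinatorial bookkeeping confirming that the extensions on $\E(e_0, e_n)$ and $\E(e_1, e_n)$ produced this way are compatible with the composition map $\CC[\Delta^n](0,1) \times \CC[\Delta^n](1,n) \to \CC[\Delta^n](0,n)$ and with the rest of the simplicial-category structure of $\CC[\Delta^n]$. In practice this boils down to the observation --- encoded in the pullback (\ref{eq:opfib-pullback}) --- that the $P$-coCartesian property of $\chi$ is preserved under all the relevant restrictions and compositions; the base case $n = 2$ is the cleanest illustration, where the pullback produces a unique $\tau \colon e_1 \to e_2$ with $P\tau = \varphi_2$ and $\tau \chi$ equal to the prescribed $0 \to 2$ edge.
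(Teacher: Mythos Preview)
Your route is quite different from the paper's. The paper does not attempt a direct horn-filling argument at all: it simply observes that $P_{ex}\colon \E(e,x)\to\D(Pe,Px)$ is a Kan fibration (target discrete, source Kan), so the strict pullback (\ref{eq:opfib-pullback}) is already a \emph{homotopy} pullback, and then invokes \cite{htt}*{2.4.1.10}, which characterises $\N(P)$-coCartesian edges precisely by that homotopy-pullback condition. All of the horn combinatorics you are trying to do by hand is packaged inside that citation.

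Your direct approach can be made to work, but as written it has a real error and a genuine gap. The error is in your description of $\CC[\Lambda^n_0](0,n)$ and $\CC[\Lambda^n_0](1,n)$: these are \emph{not} the cubes minus the open star of the top vertex, because you have forgotten the simplices produced by composition inside the simplicial category $\CC[\Lambda^n_0]$. Already for $n=3$ the composite of the edges $1\to 2$ and $2\to 3$ (both present in $\Lambda^3_0$) contributes a second vertex to $\CC[\Lambda^3_0](1,3)$, so that mapping space is $\partial\Delta^1$, not a point. In general one finds $\CC[\Lambda^n_0](1,n)=\partial(\Delta^1)^{n-2}$ and $\CC[\Lambda^n_0](0,n)=\partial(\Delta^1)^{n-1}\setminus\mathrm{int}\{x_1=1\}$. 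Note in particular that the inclusion on $(1,n)$ is \emph{not} anodyne for $n\geq 3$, so you cannot extend there in isolation---the coCartesian hypothesis on $\chi$ is essential, not merely convenient.

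The gap is the step you flag as ``the main obstacle'': your claim that the $(1,n)$ extension ``is already prescribed by the $\Lambda^n_0$ data'' is not correct---nothing is prescribed, a choice must be made, and it must be made compatibly on $(0,n)$ and $(1,n)$. The honest argument runs the other way: first extend on $(0,n)$ from $\partial(\Delta^1)^{n-1}\setminus\mathrm{int}\{x_1=1\}$ to the full cube (this inclusion \emph{is} anodyne, being a cofibration between contractible complexes), then restrict the result to the face $\{x_1=1\}$ and use the strict pullback (\ref{eq:opfib-pullback}) to transport it to $\E(e_1,e_n)$; one then checks that this restricts correctly on $\partial\{x_1=1\}=\CC[\Lambda^n_0](1,n)$ because the original $\Lambda^n_0$-data was a simplicial functor. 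That is exactly the ``combinatorial bookkeeping'' you gesture at but do not carry out, and it is the entire content of the coCartesian half of the proposition.
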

	\begin{proof}
		It suffices to show that any $P$-coCartesian arrow in $\E$ gives rise to a $\N(P)$-coCartesian arrow in $\N(\E)$.
		If $\chi \colon e \to e'$ is $P$-coCartesian, then (\ref{eq:opfib-pullback})
		is an ordinary pullback in $\sSet$ for all $x \in \E$.
		Since $\D(Pe, Px)$ is discrete and $\E(e,x)$ is fibrant, $P_{ex}$ is a fibration\footnote{Any map into a coproduct of simplicial sets induces a coproduct decomposition on its domain (by taking fibers over each component of the codomain). Since all horns $\Lambda^n_k$ are connected, any commuting square from a horn inclusion to $P_{ex}$ necessarily factors through one of the components of $\E(e,x)$, and may thus be lifted because $\E(e,x)$ is fibrant.};
		since $\D(Pe', Px)$ is also fibrant, this ordinary pullback is in fact a \emph{homotopy} pullback \cite{htt}*{A.2.4.4}.
		Thus, by \cite{htt}*{2.4.1.10}, $\chi$ gives rise to a $\N(P)$-coCartesian arrow in $\N(\E)$.
	\end{proof}

	\begin{remark}
		The discreteness of $\D$ and fibrancy of $\E$ are critical here.
		An arbitrary $\sSet$-enriched opfibration $P \colon \E \to \D$ is unlikely to give rise to a coCartesian fibration $\N(P) \colon \N(\E) \to \N(\D)$.
		Essentially, we require the ordinary pullback in (\ref{eq:opfib-pullback}) to be a homotopy pullback.
	\end{remark}	

	\begin{corollary}
		Let $\D$ be a small category and $F \colon \D \to \sCat$ be such that each $Fd$ is locally Kan.
		Then $\N(\Gr F) \to \N(\D)$ is a coCartesian fibration.
	\end{corollary}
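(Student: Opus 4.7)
The plan is to chain together the two immediately preceding results: Proposition \ref{prop:bw411} gives that $P \colon \Gr F \to \D$ is a simplicial opfibration, and Proposition \ref{prop:opfibtococart} upgrades such an opfibration to a coCartesian fibration on nerves, provided that the codomain is discrete (which it is, since $\D$ is an ordinary small category viewed as a discrete simplicial category) and that the domain is locally Kan.

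The only thing left to verify is therefore that $\Gr F$ is locally Kan. By definition,
\[
	\Gr F\big((x,c),(y,d)\big) = \coprod_{\varphi \colon c \to d} Fd(F\varphi\, x,\, y) \times \{\varphi\}.
\]
Each summand $Fd(F\varphi\, x, y)$ is a Kan complex by the hypothesis that $Fd$ is locally Kan, and a coproduct of Kan complexes in $\sSet$ is again a Kan complex (any horn is connected, so a lifting problem against such a coproduct factors through a single summand). Hence every hom-object of $\Gr F$ is a Kan complex.

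With this verified, I would simply invoke Proposition \ref{prop:opfibtococart} applied to $P \colon \Gr F \to \D$ to conclude that $\N(P) \colon \N(\Gr F) \to \N(\D)$ is a coCartesian fibration. No step here presents a real obstacle; the proof is essentially a two-line assembly of the preceding propositions, with the local Kan-ness of $\Gr F$ the one small observation that needs to be recorded explicitly.
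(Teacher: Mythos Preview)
Your proposal is correct and matches the paper's intended argument exactly: the paper states this as an immediate corollary of Propositions \ref{prop:bw411} and \ref{prop:opfibtococart} without spelling out a proof. Your explicit verification that $\Gr F$ is locally Kan (via the coproduct-of-Kan-complexes observation) is the one detail the paper leaves implicit, and you have supplied it correctly.
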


	\subsection{Comparing $\N(\Gr F)$ and $\N_f(\D)$}
	\begin{theorem}
		\label{thm:gr-rel-nerve}
		Let $F \colon \D \to \sCat$ be a functor, and $f = \N F$.
		Then there is an isomorphism of coCartesian fibrations
		\[
			\N(\Gr F) \cong \N_f(\D).
		\]
	\end{theorem}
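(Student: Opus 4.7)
The plan is to write both sides as explicit combinatorial data indexed by a common underlying functor $d\colon[n]\to\D$, and then exhibit a natural bijection between these data.

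By the adjunction $\CC\dashv\N$, an $n$-simplex of $\N(\Gr F)$ is a simplicial functor $\phi\colon\CC[\Delta^n]\to\Gr F$. Unwinding the definition of $\Gr F$, such a functor amounts to: (a) a functor $d\colon[n]\to\D$ (obtained by postcomposing with $P\colon\Gr F\to\D$); (b) for each $i\in[n]$, an object $x_i:=\phi(i)\in Fd_i$; and (c) for each pair $i\leq j$, a simplicial map $\phi_{ij}\colon\CC[\Delta^n](i,j)\to Fd_j(Fd_{ij}\,x_i,\,x_j)$, compatible with identities and with composition in $\CC[\Delta^n]$, where the latter is translated via the composition formula for $\Gr F$. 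By contrast, an $n$-simplex of $\N_f(\D)$ consists of a functor $d\colon[n]\to\D$ together with maps $s^J\colon\Delta^J\to\N(Fd_{\max J})$ for each nonempty $J\subseteq[n]$, subject to (\ref{eq:rel-nerve}); the singletons $J=\{i\}$ pick out objects $x_i\in Fd_i$.

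The core step is to match these coherence data. For $J\subseteq[n]$ with $j=\max J$, the adjoint of $s^J$ is a simplicial functor $\widehat{s}^J\colon\CC[\Delta^J]\to Fd_j$, and the instances of (\ref{eq:rel-nerve}) involving singletons force $\widehat{s}^J(i')=Fd_{i'j}(x_{i'})$ for every $i'\in J$; hence $\widehat{s}^J$ is determined by hom-maps
\[
	\CC[\Delta^J](i',i'')\to Fd_j\bigl(Fd_{i'j}\,x_{i'},\,Fd_{i''j}\,x_{i''}\bigr)
\]
for $i'\leq i''$ in $J$. The inclusion $J\hookrightarrow[n]$ induces $\CC[\Delta^J]\to\CC[\Delta^n]$, and this is an isomorphism on the $(i,j)$-hom object whenever $J$ is the full interval $[i,j]:=\{i,i{+}1,\dots,j\}$ (both sides being the nerve of the poset of subsets of $[i,j]$ containing $i$ and $j$). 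This delivers the bijection: from $\{\phi_{ij}\}$ we build $\widehat{s}^J$ by precomposing with $\CC[\Delta^J]\to\CC[\Delta^n]$ and postcomposing with $Fd_{i''j}$; conversely, from $\{s^J\}$ we recover $\phi_{ij}$ as the $(i,j)$-hom-component of $\widehat{s}^{[i,j]}$.

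Finally, one verifies that the functoriality of $\phi$ (composition in $\CC[\Delta^n]$ via unions of subsets in the hom-posets) corresponds, under this bijection, to the instances of (\ref{eq:rel-nerve}) with $\max I=\max J$, while the instances with $\max I<\max J$ are built into the construction via the pushforwards $Fd_{\max I,\max J}$. Naturality in $[n]$ is immediate since every map in $\Delta$ acts compatibly on both $\CC[\Delta^{\bullet}]$ and on subposets of $[n]$, so we obtain an isomorphism of simplicial sets $\N(\Gr F)\cong\N_f(\D)$; both projections to $\N(\D)$ send an $n$-simplex to its underlying functor $d$, so this isomorphism lies over $\N(\D)$, and hence is an isomorphism of coCartesian fibrations (both sides being coCartesian fibrations by Proposition \ref{prop:rel-nerve-infty-gr} and the preceding corollary). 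I expect the principal obstacle to be the careful bookkeeping in the bijection above: matching the union-of-subsets composition in $\CC[\Delta^n]$ against the pasting-square compatibilities of $\{s^J\}$ while tracking how the pushforwards $Fd_{ij}$ interact with the hom-maps on both sides.
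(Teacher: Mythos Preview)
Your proposal is correct and arrives at the same bijection as the paper, but it is organized differently. The paper appeals to the Riehl--Verity \emph{bead shape} description of simplicial functors $\CC[\Delta^n]\to\K$ (Lemma~\ref{lem:simplicial-functor}): an $n$-simplex on either side becomes a family of $r$-simplices $S_{\langle I_0|\cdots|I_r\rangle}$ indexed by bead shapes, and the bijection is the explicit formula $S^J_{\langle I_0|\cdots|I_r\rangle}:=Fd_{i_mj}\,S_{\langle I_0|\cdots|I_r\rangle}$ together with its inverse $S_{\langle I_0|\cdots|I_r\rangle}:=S^I_{\langle I_0|\cdots|I_r\rangle}$. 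You instead keep the hom-simplicial-sets $\CC[\Delta^n](i,j)$ intact and use the observation that $\CC[\Delta^{[i,j]}](i,j)\cong\CC[\Delta^n](i,j)$ to read off $\phi_{ij}$ from $\widehat{s}^{[i,j]}$; your general $\widehat{s}^J$ is then built by restriction along $\CC[\Delta^J]\hookrightarrow\CC[\Delta^n]$ and pushforward by $Fd_{i''j}$. Since bead shapes are precisely a simplex-by-simplex parametrization of those hom-cubes, the two descriptions encode the same map. What your packaging buys is independence from the bead-shape calculus; what the paper's buys is that the ``boundary compatible with lower-dimensional data'' clause in Lemma~\ref{lem:simplicial-functor} absorbs the composition-compatibility checks you flag as the principal obstacle, so the mutual-inverse verification there is a one-line observation. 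One small correction: your division of the compatibility squares~(\ref{eq:rel-nerve}) into the cases $\max I=\max J$ versus $\max I<\max J$ does not line up cleanly with ``functoriality of $\phi$'' versus ``built in''---checking that $\phi_{ik}$ respects composition over $i\leq j\leq k$ already uses both $[j,k]\subset[i,k]$ and $[i,j]\subset[i,k]$, and recovering a non-interval $s^J$ from $\phi$ likewise uses $J\cap[i',i'']\subset J$ and $J\cap[i',i'']\subset[i',i'']$, which again mix the two cases.
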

	\begin{proof}
		We will only explicitly describe the $n$-simplices of $\N(\Gr F)$ and $\N_f(\D)$ and show that they are isomorphic.
		From the description, it should be clear that we do indeed have an isomorphism of simplicial sets that is compatible with their projections down to $\N(\D)$, hence an isomorphism of coCartesian fibrations (by \cite{riehl2017fibrations}*{5.1.7}, for example).

		\paragraph{Description of $\N(\Gr F)_n$.}
		An $n$-simplex of $\N(\Gr F)$ is a simplicial functor $S \colon \CC[\Delta^n] \to \Gr F$.
		By Lemma \ref{lem:simplicial-functor}, this is the data of:
		\begin{itemize}
			\item for each $i \in [n]$, an object $S_i = (x_i, d_i) \in \Gr F$, (so $d_i \in \D, x_i \in Fd_i$)
			\item for each $r$-dimensional bead shape $\bead{I_0 | \dots | I_r}$ of  $\{i_0 < \dots < i_m\} \subseteq [n]$ where $m\geq 1$, an $r$-simplex
			\[
				S_\bead{I_0 | \dots | I_r} \in \Gr F (S_{i_0}, S_{i_m}) = \coprod_{\varphi \in \D(d_{i_0}, d_{i_m})} Fd_{i_m}(F\varphi\; x_{i_0}, x_{i_m})
			\]
			whose boundary is compatible with lower-dimensional data.
		\end{itemize}		

		\paragraph{Description of $\N_f(\D)_n$.}
		An $n$-simplex of $\N_f(\D)$ consists of a functor $d\colon [n] \to \D$, picking out objects and arrows $d_i \xrightarrow{d_{ij}} d_j$ for all $0 \leq i \leq j \leq n$ such that $d_{ii}$ are identities and
		\[
			d_{jk}d_{ij} = d_{ik}, \quad i \leq j \leq k,
		\]
		and a family of maps $s^J \colon \Delta^J \to fd_j$ for every $J \subseteq [n]$ with maximal element $j$, satisfying (\ref{eq:rel-nerve}).
		Since $f = \N F$, such maps $s^J \colon \Delta^J \to \N Fd_j$ correspond, under the $\CC \dashv \N$ adjunction, to maps $S^J \colon \CC[\Delta^J] \to Fd_j$ satisfying:
		\begin{equation} \label{eq:rel-nerve-transpose}
			\begin{tikzcd}
				\CC[\Delta^I] \ar[r, "S^I"]  \ar[d, hookrightarrow] & Fd_i \ar[d, "F d_{ij}"]
				\\
				\CC[\Delta^J] \ar[r, "S^J"] & Fd_j
			\end{tikzcd}
		\end{equation}
		By Lemma \ref{lem:simplicial-functor}, each $S^J$ is the data of:
		\begin{itemize}
			\item for each $i \in J$, an object $S^J_i \in Fd_j$
			\item for each $r$-dimensional bead shape $\bead{I_0 | \dots | I_r}$ of $\{i_0 < \dots < i_m\} \subseteq J$ where $m \geq 1$, an $r$-simplex
			\[
				S^J_\bead{I_0|\dots | I_r} \in Fd_j(S^J_{i_0}, S^J_{i_m})
			\]
			whose boundary is compatible with lower-dimensional data.
		\end{itemize}
		The condition (\ref{eq:rel-nerve-transpose}) is equivalent to
		\begin{align} \label{eq:rel-nerve-explicit}
			Fd_{ij}\, S^I_k &= S^J_k, &\text{and} & & Fd_{ij}\, S^I_\bead{I_0|\dots|I_r} &= S^J_\bead{I_0|\dots | I_r}.
		\end{align}
		for any $k \in I$ and bead shape $\bead{I_0|\dots | I_r}$ of $I \subseteq J$.

		\paragraph{From $\N(\Gr F)_n$ to $\N_f(\D)_n$.}
		Given $S \colon \CC[\Delta^n] \to \Gr F$, we first produce a functor $d \colon [n] \to \D$.
		For any $\{i < j\} \subseteq [n]$, we have a $0$-simplex 
		\[
			S_\bead{ij} = (Fd_{ij} x_i \xrightarrow{x_{ij}} x_j , d_i \xrightarrow{d_{ij}} d_j) \in \Gr F \big((x_i, d_i), (x_j, d_j)\big)_0,
		\]
		and for any $\{i < j < k\} \subseteq [n]$, we have a $1$-simplex $S_\bead{ik|j}$ from $S_\bead{ik}$ to
		\[
			S_\bead{jk}S_\bead{ij} = (Fd_{jk} Fd_{ij} x_i  \xrightarrow{Fd_{jk} x_{ij}} Fd_{jk} x_j \xrightarrow{x_{jk}} x_k\;,\;\; d_i \xrightarrow{d_{ij}} d_j \xrightarrow{d_{jk}} d_k ).
		\]
		But such a $1$-simplex includes the data of a $1$-simplex from $d_{ik}$ to $d_{jk}d_{ij}$ in the \emph{discrete} simplicial set $\D(x_i, x_k)$.
		Thus $d_{ik}$ must be \emph{equal} to $d_{jk} d_{ij}$, so the data of $\{ d_i \xrightarrow{d_{ij}} d_j \}_{i \leq j}$, where $d_{ii}$ is the identity, assembles into a functor $d \colon [n] \to \D$ as desired.
		Note that since $F$ is a functor, we also have
		\[
			Fd_{jk}\; Fd_{ij} = F(d_{jk}d_{ij}) = Fd_{ik}.
		\]		
		Next, for each non-empty subset $J \subseteq [n]$ with maximal element $j$, we need a simplicial functor $S^J \colon \CC[\Delta^J] \to Fd_j$.
		For each $i \in J$, set
		\[
			S^J_i := Fd_{ij}\, x_i \in Fd_j.
		\]
		For each $r$-dimensional bead shape $\bead{I_0|\dots|I_r}$ of $\{i_0 < \dots < i_m\} \subseteq J$ with $m \geq 1$, we first note that $S_\bead{I_0|\dots |I_r}$ lies in the $d_{i_0 i_m}$ component
		\[
			Fd_{i_m}(Fd_{i_0 i_m}\, x_{i_0}, x_{i_m}) \subset \Gr F(S_{i_0}, S_{i_m})
		\]
		because its sub-simplices (for instance $S_\bead{i_0 i_m}$) do too. 
		Define
		\[
			S^J_\bead{I_0 | \dots | I_r} := Fd_{i_m j}\, S_\bead{I_0|\dots |I_r}.
		\]
		We verify that this lives in the correct simplicial set
		\begin{align*}
			Fd_j(Fd_{i_m j}\; Fd_{i_0 i_m}\; x_{i_0}, Fd_{i_m j}\; x_{i_m}) &= Fd_j(Fd_{i_0 j} x_{i_0}, Fd_{i_m j} x_{i_m}) \\
			&= Fd_j(S^J_{i_0}, S^J_{i_m}).
		\end{align*}
		The boundary of each $S^J_\bead{I_0 | \dots | I_r}$ is compatible with lower-dimensional data because the boundary of each $S_\bead{I_0 | \dots | I_r}$ is as well.
		We thus get a simplicial functor $S^J \colon \CC[\Delta^J] \to Fd_j$, and by construction, the functoriality of $F$ and $d$ implies that (\ref{eq:rel-nerve-explicit}) holds.

		\paragraph{From $\N_f(\D)_n$ to $\N(\Gr F)_n$.}
		Conversely, suppose we have $d \colon [n] \to \D$ and $S^J \colon \CC[\Delta^J] \to Fd_j$ for every non-empty $J \subseteq [n]$ with maximal element $j$, satisfying (\ref{eq:rel-nerve-explicit}).		
		For each $i \in [n]$, let $S_i := (S^{\{i\}}_i, d_i)$, 
		and for each $r$-dimensional bead shape $\bead{I_0 | \dots | I_r}$ of $I = \{i_0,\dots,i_m\} \subseteq [n]$ where $m \geq 1$, let
		\[
			S_\bead{I_0| \dots |I_r} := S^I_\bead{I_0| \dots I_r}.
		\]
		Then $S_\bead{I_0| \dots | I_r}$ is an $r$-simplex in 
		\[
 			 Fd_{i_m}(S^I_{i_0}, S^I_{i_m}) = Fd_{i_m}(Fd_{i_0 i_m}\; S^{\{i_0\}}_{i_0}, S^{\{i_m\}}_{i_m})  \subset \Gr F(S_{i_0}, S_{i_m})
		\]
		as desired,	where we have used (\ref{eq:rel-nerve-explicit}) in the first equality,
		and this data yields a simplicial functor $S \colon \CC[\Delta^n] \to \Gr F$.

		\paragraph{Mutual inverses.}
		Finally, it is easy to see that the constructions described above are mutual inverses.
		For instance, we have
		\begin{align*}
			S_\bead{I_0 | \dots | I_r} &= Fd_{ii}\, S_\bead{I_0 | \dots | I_r}, \\
			S^J_\bead{I_0 | \dots | I_r} &= Fd_{ij}\, S^I_\bead{I_0 | \dots | I_r}.
		\end{align*}
		Thus $\N(\Gr F)_n \cong \N_f(\D)_n$. 
	\end{proof}

	In light of Proposition \ref{prop:rel-nerve-infty-gr}, we obtain:
	\begin{corollary}
	\label{cor:gr-infty-gr}
		Let $F \colon \D \to \sCat$ be a functor such that each $Fd$ is a quasicategory, and $f = \N F$. 
		Then there is an equivalence of coCartesian fibrations
		\[
			\N(\Gr F) \simeq \Gr_\infty \N(f).
		\]
	\end{corollary}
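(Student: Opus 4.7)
The proof is essentially a two-step assembly from results already in hand, so the plan is less about finding new arguments and more about verifying that the hypotheses line up correctly.

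First, I would observe that the hypotheses of Theorem \ref{thm:gr-rel-nerve} apply verbatim (we have a functor $F \colon \D \to \sCat$ and we set $f = \N F$), yielding an \emph{isomorphism} of coCartesian fibrations
\[
\N(\Gr F) \cong \N_f(\D)
\]
over $\N(\D)$. Before invoking that theorem I would double-check that $\N(\Gr F) \to \N(\D)$ really is a coCartesian fibration, which is provided by the corollary to Proposition \ref{prop:opfibtococart} (requiring each $Fd$ to be locally Kan so that $\Gr F$ is locally Kan and the hypotheses of that proposition are met).

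Second, I would apply Proposition \ref{prop:rel-nerve-infty-gr}(ii) to the functor $f = \N F \colon \D \to \sSet$. Its hypothesis asks that each $fd = \N Fd$ be a quasicategory, which is exactly the condition that each $Fd$ be locally Kan; this is precisely what the statement of the corollary should be assuming (the phrase ``each $Fd$ is a quasicategory'' should be read as ``each $\N Fd$ is a quasicategory''). The proposition then supplies an equivalence of coCartesian fibrations
\[
\N_f(\D) \simeq \Gr_\infty \N(f).
\]

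Finally, I would concatenate these: the isomorphism from Theorem \ref{thm:gr-rel-nerve} is in particular an equivalence of coCartesian fibrations over $\N(\D)$, and composing with the equivalence from Proposition \ref{prop:rel-nerve-infty-gr}(ii) gives the desired $\N(\Gr F) \simeq \Gr_\infty \N(f)$. There is no real obstacle here; the only point worth a moment's care is bookkeeping about the hypothesis ``locally Kan'' versus ``quasicategory,'' making sure one is invoked at the $\sCat$ level (for Theorem \ref{thm:gr-rel-nerve} and Proposition \ref{prop:opfibtococart}) and translated into the $\sSet$-level hypothesis for Proposition \ref{prop:rel-nerve-infty-gr}.
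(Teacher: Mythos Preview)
Your proposal is correct and matches the paper's approach exactly: the corollary is stated immediately after Theorem \ref{thm:gr-rel-nerve} with the preface ``In light of Proposition \ref{prop:rel-nerve-infty-gr}, we obtain,'' so the intended proof is precisely the concatenation $\N(\Gr F) \cong \N_f(\D) \simeq \Gr_\infty \N(f)$ that you spell out. You are also right to flag the hypothesis wording: ``each $Fd$ is a quasicategory'' is a slip for ``each $Fd$ is locally Kan'' (equivalently, each $\N Fd$ is a quasicategory), which is what is needed both for the corollary to Proposition \ref{prop:opfibtococart} and for Proposition \ref{prop:rel-nerve-infty-gr}(ii).
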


\section{Operadic nerves of monoidal simplicial categories}
\label{sec:monoid-struct}
	Given a monoidal simplicial category $\C$, \cite{dag2}*{1.6} describes the formation of a simplicial category $\C^\otimes$ equipped with an opfibration over $\Delta^\op$.
	The nerve of this opfibration is a coCartesian fibration $\N(\C^\otimes) \to \N(\Delta^\op)$ which has the structure of a monoidal quasicategory in the sense of \cite{dag2}*{1.1.2}.
	Since this construction is exactly the operadic nerve of \cite{ha}*{2.1.1} applied to the underlying simplicial operad of $\C$, we call $\N^\otimes(\C) := \N(\C^\otimes)$ the \emph{operadic nerve of a monoidal simplicial category $\C$}. 

	In this section, we apply the results of the previous section to further describe the process of obtaining $\N^\otimes(\C)$ from a \emph{strict} monoidal $\C$.
	We show that the opfibration $\C^\otimes \to \Delta^\op$ is the Grothendieck construction $\Gr\, \C^\bullet$ of a functor $\C^\bullet \colon \Delta^\op \to \sCat$, and hence conclude that the operadic nerve $\N^\otimes(\C)$ is the nerve of $\Delta^\op$ relative to $\Delta^\op \xrightarrow{\C^\bullet} \sCat \xrightarrow{\N} \sSet.$

	Although the operadic nerve may be defined for any monoidal simplicial category $\C$, we restrict the discussion in this section to \emph{strict} monoidal categories because the results of the previous section require strict functors $\D \to \sCat$ and $\D \to \sSet$ rather than pseudofunctors.

\subsection{$\C^\otimes$ and $\C^\bullet$ from a strict monoidal $\C$}
	We start by describing the opfibration $\C^\otimes \to \Delta^\op$ and the functor $\C^\bullet \colon \Delta^\op \to \sCat$ associated to a strict monoidal simplicial category $\C$.

	\begin{definition}
		A \fm{strict monoidal simplicial category} $\C$ is a monoid in $(\sCat, \times, *)$.
		Let $\otimes \colon \C \times \C \to \C$ denote the monoidal product of $\C$ and $\1 \colon * \to \C$ denote the monoidal unit, which we identify with an object $\1 \in \C$. Let $\sMonCat$ denote the category of strict monoidal simplicial categories, which is equivalently the category of monoids in $\sCat$.
	\end{definition}

	A strict monoidal simplicial category is thus a simplicial category with a strict monoidal structure that is \emph{weakly compatible} in the sense of \cite{dag2}*{1.6.1}.
	The strictness of the monoidal structure implies that we have equalities (rather than isomorphisms):
	\begin{align*}
		(x\otimes y) \otimes z &= x \otimes (y \otimes z), & 	\1 \otimes x &= x = x \otimes \1.
	\end{align*}

	\begin{definition}[\cite{dag2}*{1.1.1}]
		Let $(\C,\otimes, \1)$ be a strict monoidal simplicial category. 
		Then we define a new category $\C^\otimes$ as follows:
		\begin{enumerate}
			\item An object of $\C^\otimes$ is a finite, possibly empty, sequence of objects of $\C$, denoted $[x_1,\ldots,x_n].$
			\item The simplicial set of morphisms from $[x_1,\ldots,x_n]$ to $[y_1,\ldots,y_m]$ in $\C^\otimes$ is defined to be
			\[
			\coprod_{f \in \Delta\left([m],[n]\right)}\; \prod_{1\leq i\leq m} \C\big(x_{f(i-1)+1}\otimes x_{f(i-1)+2} \otimes \cdots\otimes x_{f(i)}\;,\;\; y_i\big)
			\] 
			 where $x_{f(i-1)+1}\otimes \cdots\otimes x_{f(i)}$ is taken to be $\1$ if $f(i-1) = f(i)$.

			 A morphism will be denoted $[f; f_1,\dots, f_m]$, where
			 \[
			 	x_{f(i-1)+1}\otimes \cdots\otimes x_{f(i)} \xto{\quad f_i \quad} y_i.
			 \]
			\item Composition in $\C^{\otimes}$ is determined by composition in $\Delta$ and $\C$:
			\begin{align*}
				[g; g_1, \dots g_\ell] \circ [f; f_1,\dots, f_m] &= [f\circ g\;; \;\; h_1, \dots, h_\ell], \\
				\text{where} \quad h_i &= g_i \circ (f_{g(i-1)+1} \otimes \dots \otimes f_{g(i)}).
			\end{align*}
			This is associative and unital due to the associativity and unit constraints of $\otimes$. 
		\end{enumerate}
	\end{definition}

	\begin{remark}
		Though we don't make it explicit here, $\C^\otimes$ is the category of operators (in the sense of \cite{maythom} and \cite{gephaugenriched}*{2.2.1}) of the underlying simplicial multicategory (cf.~\cite{gephaugenriched}*{3.1.6}) of $\C$. 
	\end{remark}

	There is a forgetful functor $P \colon \C^\otimes \to \Delta^\op$ sending $[x_1,\dots, x_n]$ to $[n]$
	which is an (unenriched) opfibration of categories \cite{dag2}*{1.1(M1)}.
	The proof of that statement can easily be modified to show:

	\begin{proposition}\label{prop:Cotimesopfib}
	The functor $P \colon \C^\otimes\to \Delta^\op$ is a simplicial opfibration.
	\end{proposition}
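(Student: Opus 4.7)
The plan is to lift the standard coCartesian lifts of the unenriched opfibration $P$ and then verify the $\sSet$-pullback condition \eqref{eq:opfib-pullback} by strictly identifying fibers over the discrete base.

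First, I would write down the candidate lift. Fix an object $[x_1,\dots,x_n] \in \C^\otimes$ and a morphism $\alpha \colon [n] \to [m]$ of $\Delta^\op$, i.e.\ an order-preserving map $\alpha \colon [m] \to [n]$. Set
\[
    y_i \;:=\; x_{\alpha(i-1)+1} \otimes \cdots \otimes x_{\alpha(i)} \qquad (1 \leq i \leq m),
\]
with $y_i = \1$ when $\alpha(i-1) = \alpha(i)$, and define
\[
    \chi \;:=\; [\alpha\,;\; \mathrm{id}_{y_1}, \dots, \mathrm{id}_{y_m}] \colon [x_1,\dots,x_n] \longrightarrow [y_1,\dots,y_m].
\]
By construction $P(\chi) = \alpha$, so $\chi$ is a lift of $\alpha$ with the correct source.

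Next, I would verify that $\chi$ is $P$-coCartesian in the $\sSet$-enriched sense, i.e.\ that for every $[z_1,\dots,z_\ell] \in \C^\otimes$ the square
\[
    \begin{tikzcd}[column sep = large]
        \C^\otimes([y_\bullet],[z_\bullet]) \ar[r, "-\circ \chi"] \ar[d] & \C^\otimes([x_\bullet],[z_\bullet]) \ar[d]
        \\
        \Delta^\op([m],[\ell]) \ar[r, "-\circ \alpha"] & \Delta^\op([n],[\ell])
    \end{tikzcd}
\]
is a pullback in $\sSet$. Since the base is a discrete set-valued hom, it suffices to identify the fibers of the two vertical maps component-wise over each $g \in \Delta([\ell],[n])$ in the image of $-\circ \alpha$. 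Such a $g$ factors uniquely through $\alpha$ as $g = \alpha\circ h$ for some $h \colon [\ell] \to [m]$ (up to this factorization existing; if no such $h$ exists, the fiber on the left is empty and the pullback condition is vacuous). For the factoring $g = \alpha h$, strict associativity and unitality of $\otimes$ yield the \emph{equality}
\[
    y_{h(i-1)+1} \otimes \cdots \otimes y_{h(i)} \;=\; x_{g(i-1)+1} \otimes \cdots \otimes x_{g(i)},
\]
so the fibers of the right vertical map over $g$ and of the left vertical map over $h$ are literally the \emph{same} simplicial set $\prod_i \C\bigl(x_{g(i-1)+1}\otimes\cdots\otimes x_{g(i)},\, z_i\bigr)$. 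Unwinding the composition rule $[h;h_1,\dots,h_\ell]\circ[\alpha;\mathrm{id},\dots,\mathrm{id}]$ shows that $-\circ\chi$ restricts to the identity map on these fibers. Hence each fiber square is trivially a pullback, and since pullbacks in $\sSet$ over a discrete set are computed fiberwise over that set, the full square is a pullback.

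The only real obstacle is notational bookkeeping around the strict associativity and unit isomorphisms; the proof is essentially the standard construction of coCartesian lifts in $\C^\otimes$ from \cite{dag2}*{1.1(M1)}, with the observation that because $(\sCat,\times,*)$-enrichment places the hom-simplicial-sets into a coproduct indexed by a discrete set $\Delta([\ell],[m])$, the pullback condition reduces to a per-fiber identity. I would therefore keep the presentation brief, emphasizing the strictness of $\otimes$ as the reason this works on the nose rather than up to coherent isomorphism.
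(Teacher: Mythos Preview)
Your approach is exactly what the paper's one-line proof (``replace all hom-sets by hom-\emph{simplicial}-sets in \cite{dag2}*{1.1(M1)}'') is gesturing at: exhibit the canonical lift $\chi = [\alpha;\mathrm{id}_{y_1},\dots,\mathrm{id}_{y_m}]$ and verify the pullback square \eqref{eq:opfib-pullback} fiberwise over the discrete base. So the strategy is the same, and your computation that $-\circ\chi$ acts as the identity on each summand is correct and is the heart of the matter.

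There is, however, a small gap in the logic. The claim that a $g$ in the image of $-\circ\alpha$ ``factors uniquely through $\alpha$'' is false in general: take $\alpha$ to be any degeneracy, e.g.\ $\alpha\colon[1]\to[0]$, and observe that both maps $[0]\to[1]$ compose with $\alpha$ to give the identity. Consequently, checking that the fibers of the two vertical maps agree over each $g$ is not the right reduction. The fix is to index by $h\in\Delta([\ell],[m])$ rather than by $g$: since the bottom-left corner is discrete, the pullback is
\[
\coprod_{h\colon[\ell]\to[m]}\bigl(\text{fiber of the right vertical over }\alpha h\bigr),
\]
and you have already shown that the $h$-summand of $\C^\otimes([y_\bullet],[z_\bullet])$ is literally equal to this fiber and that $-\circ\chi$ is the identity map between them. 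With that reindexing your argument goes through unchanged.
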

	\begin{proof}
		Replace all hom-sets by hom-\emph{simplicial}-sets in \cite{dag2}*{1.1(M1)}.
	\end{proof}

	In fact, we may choose $P$-coCartesian lifts so that $P$ is a \emph{split} simplicial opfibration\footnote{This essentially means that $\C^\bullet$ is a functor rather than a pseudofunctor. Note that if $\C$ is not strictly monoidal, then $x_{f(i-1)+1} \otimes \dots \otimes x_{f(i)}$ is not well-defined: a choice of parentheses needs to be made. Although the various choices are isomorphic, they are not identical, and this obstructs our ability to obtain a split opfibration.}:
	given $[x_1, \dots, x_n] \in \C^\otimes$ and a map $f \colon [m] \to [n]$, let 
	\begin{equation} \label{eq:yi}
		y_i = x_{f(i-1)+1} \otimes \dots \otimes x_{f(i)}
	\end{equation}
	for all $1 \leq i \leq m$. Then $[f; 1_{y_1}, \dots, 1_{y_m}]$ is a $P$-coCartesian lift of $f$.

	By the enriched Grothendieck correspondence \cite{beardswong}*{Theorem 5.6}, the split simplicial opfibration $P\colon \C^\otimes \to \Delta^\op$ with this choice of coCartesian lifts arises from a functor $\C^\bullet \colon \Delta^\op \to \sCat$ which we now describe.

	\begin{definition} \label{def:C-faces}
		Let $\C$ be a strict monoidal simplicial category with monoidal product, unit and terminal morphisms the simplicial functors $\mu_{\C}\colon \C\times\C\to C$, $\eta\colon \1\to \C$ and $\varepsilon\colon \C\to \1$ respectively. Then for each $0\leq i\leq n$ we define the functor $\C^{\delta_i}\colon \C^n\to \C^{n-1}$ to be:
		\begin{enumerate}[label=(\roman*)]
			\item the application of $\mu_{\C}$ to the $i^{th}$ and $i+1^{st}$ coordinates of $\C^n$, and the identity in all other coordinates, in the case that $0<i<n$;
			\item the application of $\varepsilon$ to the $i^{th}$ coordinate and the identity in all other coordinates in the case that $i\in\{0,n\}$.
		\end{enumerate}
	In the other direction, for each $0\leq i\leq n$, we define a functor $\C^{\sigma_i}\colon C^{n-1}\to\C^{n}$ to be the isomorphism $\C^n\cong\C^{i}\times \1\times \C^{n-i}$ followed by the application of the unit $\eta$ in the $i^{th}$ coordinate $\C^{i}\times\1\times\C^{n-i}\to \C^{n}$. 
	\end{definition}

	\begin{definition} \label{def:C-bullet}
		Let $\C$ be a strict monoidal simplicial category. Then define the functor $\C^{\bullet}\colon \Delta^{op}\to \sCat$ to be the one that takes $[n]$ to $\C^n$, the face maps $\delta_i\colon [n-1]\to[n]$ to the functors $\C^{\delta_i}:\C^n\to \C^{n-1}$ and the degeneracy maps $\sigma_i\colon [n]\to [n-1]$ to the functors $\C^{\sigma_i}\colon \C^{n-1}\to \C^{n}$, where $\C^{\delta_i}$ and $\C^{\sigma_i}$ are as in Definition \ref{def:C-faces}. 
	\end{definition}

	\begin{remark}
		The fact that $\C$ is a strict monoid in $\sCat$ implies that the functors $\mu_i$ and $\eta_i$ satisfy the simplicial identities. This is not difficult to check but is tedious, so we will not include a proof of it. 
	\end{remark}

		\begin{remark}\label{rem:Cf}
			More generally, let $f \colon [m] \to [n]$ be a morphism in $\Delta$. Then by decomposing $f$ into a finite composition of face and degeneracy maps, we have that $\C^f \colon \C^n \to \C^m$ is the functor that sends $(x_1,\dots, x_n)$ to $(y_1, \dots, y_m)$ where $y_i$ is given by (\ref{eq:yi}), and (when restricted to zero simplices) sends $(\varphi_1, \dots, \varphi_n)$ to $(\psi_1, \dots, \psi_m)$ where \[ \psi_i = \varphi_{f(i-1)+1} \otimes \dots \otimes \varphi_{f(i)}.\]
		\end{remark}

	\begin{lemma} \label{lem:C-otimes-bullet}
		For a strict monoidal simplicial category $\C$, there is an isomorphism of simplicial categories
		\[
			\C^\otimes \cong \Gr\, \C^\bullet.
		\]
	\end{lemma}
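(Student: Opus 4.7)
The plan is to define an explicit simplicial functor $\Phi \colon \C^\otimes \to \Gr\, \C^\bullet$ on objects and hom-simplicial-sets, and show it is a bijection on objects and an isomorphism of simplicial sets on each hom-object, compatible with composition and identities.

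On objects, I would send $[x_1, \dots, x_n] \in \C^\otimes$ to $\bigl((x_1, \dots, x_n), [n]\bigr) \in \Gr\, \C^\bullet$, where $(x_1, \dots, x_n)$ is viewed as an object of $\C^n = \C^\bullet([n])$. This is clearly a bijection on objects. For the hom-objects, recall that a morphism $f \colon [m] \to [n]$ in $\Delta$ is the same data as a morphism $\varphi \colon [n] \to [m]$ in $\Delta^\op$, and by Remark \ref{rem:Cf} the functor $\C^\bullet(\varphi) = \C^f \colon \C^n \to \C^m$ sends $(x_1, \dots, x_n)$ to the tuple whose $i$-th entry is $x_{f(i-1)+1} \otimes \cdots \otimes x_{f(i)}$ (interpreted as $\1$ when $f(i-1) = f(i)$). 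Since $\C^m$ is the product simplicial category, we therefore have a tautological identification
\[
    \C^m\bigl(\C^f(x_1,\dots,x_n),\,(y_1,\dots,y_m)\bigr) \;=\; \prod_{1 \leq i \leq m} \C\bigl(x_{f(i-1)+1}\otimes\cdots\otimes x_{f(i)},\, y_i\bigr),
\]
and summing over $f \in \Delta([m],[n])$ gives exactly the hom-simplicial-set of $\C^\otimes$ from $[x_1,\dots,x_n]$ to $[y_1,\dots,y_m]$ on the one hand, and of $\Gr\, \C^\bullet$ from the corresponding objects on the other. This defines $\Phi$ as an isomorphism of simplicial sets on each hom-object; in particular, $\Phi$ sends the morphism $[f; f_1, \dots, f_m]$ to the pair $(\sigma, \varphi)$ where $\varphi$ is $f$ regarded as an arrow in $\Delta^\op$ and $\sigma = (f_1, \dots, f_m)$.

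For compatibility with composition, I would unfold both sides. Given $[f; f_1,\dots,f_m] \colon [x_1,\dots,x_n] \to [y_1,\dots,y_m]$ and $[g; g_1,\dots,g_\ell] \colon [y_1,\dots,y_m] \to [z_1,\dots,z_\ell]$, the $\C^\otimes$-composite is $[f\circ g; h_1, \dots, h_\ell]$ with $h_i = g_i \circ (f_{g(i-1)+1} \otimes \cdots \otimes f_{g(i)})$. On the $\Gr\, \C^\bullet$ side, the composite of $(\sigma,\varphi) = ((f_1,\dots,f_m), f)$ with $(\tau,\psi) = ((g_1,\dots,g_\ell), g)$ is $(\tau \circ \C^\bullet(\psi)(\sigma),\, \psi\varphi)$; the projection to $\Delta^\op$ is the composite of $f$ and $g$ (viewed in $\Delta$, this is $f \circ g$), and by the morphism-level description in Remark \ref{rem:Cf}, $\C^\bullet(\psi)(\sigma) = \C^g(f_1,\dots,f_m)$ has $i$-th component $f_{g(i-1)+1} \otimes \cdots \otimes f_{g(i)}$. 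Postcomposing with $\tau$ coordinatewise reproduces exactly the tuple $(h_1,\dots,h_\ell)$. Identities match trivially (they correspond to the identity map in $\Delta$ together with identity morphisms in each factor).

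The only real subtlety to flag is the role of strictness: the expressions $x_{f(i-1)+1} \otimes \cdots \otimes x_{f(i)}$ and $f_{g(i-1)+1} \otimes \cdots \otimes f_{g(i)}$ are only unambiguous objects/morphisms because $\otimes$ is strictly associative and strictly unital, and the identification of $\C^\bullet(\psi\varphi)$ with $\C^\bullet(\psi)\C^\bullet(\varphi)$ depends on the functoriality established via the discussion preceding Definition \ref{def:C-bullet}. Once strictness is in place, the verification is a direct bookkeeping exercise rather than a genuine obstacle, so the main work is simply laying out the matching of indexing conventions carefully.
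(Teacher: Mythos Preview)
Your proposal is correct and follows essentially the same approach as the paper: both define the obvious bijection on objects $[x_1,\dots,x_n] \mapsto ((x_1,\dots,x_n),[n])$ and identify the hom-simplicial-sets by unfolding the definition of $\Gr\,\C^\bullet$ together with Remark~\ref{rem:Cf}. Your write-up is actually more thorough than the paper's, which omits the explicit check of compatibility with composition and identities that you carry out.
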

	\begin{proof}
		This follows directly from the definitions of $\C^\otimes$, $\C^\bullet$ and $\Gr$. Explicitly, first notice that there is a bijection on objects $F\colon \Ob(C^\otimes)\to \Ob (\Gr \C^\bullet)$ given by \[
		F([x_1,\ldots,x_n])=((x_1,\ldots,x_n),[n])\in\!\!\! \coprod_{\;\;\, [m] \in \Delta^{op} \;\;\,} \Ob(\C^n)\times\{[m]\}.
		\]
		The space of morphisms from $((x_1,\ldots,x_m),[m])$ to $((y_1,\ldots,y_n),[n])$ in $\Gr(\C^\bullet)$ is, by definition, the coproduct 
		\[
		\coprod_{\varphi \colon [n] \to [m]} \C^n(\C^\varphi(x_1,\ldots,x_m),(y_1,\ldots, y_n))\times\{\varphi\},
		\] which is clearly isomorphic to 
		\[
		\coprod_{\varphi \colon [n] \to [m]} \C^n(\C^\varphi(x_1,\ldots,x_m),(y_1,\ldots, y_n)).
		\]
		By using Definition \ref{def:C-bullet}, Remark \ref{rem:Cf} and the fact that the mapping spaces of a product of categories are the product of mapping spaces, it is easy to see that this last expression is equal to 
		\[
		\coprod_{\varphi\colon [n]\to[m]}\; \prod_{1\leq i\leq m} \C\big(x_{\varphi(i-1)+1}\otimes x_{\varphi(i-1)+2} \otimes \cdots\otimes x_{\varphi(i)}\;,\;\; y_i\big)
		\] 
	\end{proof}

	\begin{remark}
		In fact, the results of this subsection hold more generally for monoidal $\V$-categories, where $\V$ satisfies the hypotheses of \cite{beardswong}, but we will not need this level of generality.
	\end{remark}

\subsection{The operadic nerve $\N^\otimes$}
	We now suppose that $\C$ is a strict monoidal \emph{fibrant} (i.e.\ locally Kan) simplicial category.
	Then $\C^\otimes$ is a fibrant simplicial category as well, so the simplicial nerves of  $\C$ and $\C^\otimes$ are both quasicategories.

	\begin{definition}
		Let $(\C, \otimes)$ be a strict monoidal fibrant simplicial category. 
		The \fm{operadic nerve of $\C$ with respect to $\otimes$} is the quasicategory
		\[
			\N^\otimes(\C) := \N(\C^\otimes).
		\]
	\end{definition}
	Combining Propositions \ref{prop:opfibtococart} and \ref{prop:Cotimesopfib} with $p := \N(P)$, we obtain:
	\begin{corollary}
		There is a coCartesian fibration $p \colon \N^\otimes(\C) \to \N(\Delta^\op)$.
	\end{corollary}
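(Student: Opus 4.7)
The plan is to apply Proposition \ref{prop:opfibtococart} to the simplicial opfibration $P \colon \C^\otimes \to \Delta^\op$ supplied by Proposition \ref{prop:Cotimesopfib}, and then observe that $p := \N(P)$ has the required form since $\N^\otimes(\C) = \N(\C^\otimes)$ and $\Delta^\op$, being an ordinary category, coincides with its own discrete simplicial nerve up to the identification used throughout the paper.

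The only nontrivial hypothesis to verify is the local Kan-ness of $\C^\otimes$. Here I would simply unwind the definition of the mapping simplicial set
\[
	\C^\otimes\big([x_1,\dots,x_n],[y_1,\dots,y_m]\big) = \coprod_{f \in \Delta([m],[n])}\; \prod_{1 \leq i \leq m} \C\big(x_{f(i-1)+1} \otimes \dots \otimes x_{f(i)},\; y_i\big).
\]
Each factor on the right-hand side is a Kan complex by the locally Kan hypothesis on $\C$, a finite product of Kan complexes is Kan, and a coproduct of Kan complexes is Kan (horns are connected, so any horn-filling problem factors through a single component). Hence $\C^\otimes$ is locally Kan.

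Proposition \ref{prop:Cotimesopfib} supplies the simplicial opfibration hypothesis, and $\Delta^\op$ is a discrete simplicial category. All hypotheses of Proposition \ref{prop:opfibtococart} are therefore met, and we conclude that $\N(P)\colon \N(\C^\otimes) \to \N(\Delta^\op)$ is a coCartesian fibration. Setting $p := \N(P)$ and recalling the definition $\N^\otimes(\C) := \N(\C^\otimes)$ yields the statement.

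There is no real obstacle here; the corollary is essentially a direct specialization, and the only piece of bookkeeping is the local Kan check above, which uses nothing beyond the fact that the class of Kan complexes is closed under finite products and arbitrary coproducts.
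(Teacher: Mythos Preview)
Your proof is correct and follows exactly the paper's approach: the paper states the corollary as an immediate combination of Propositions \ref{prop:opfibtococart} and \ref{prop:Cotimesopfib} with $p := \N(P)$. The only difference is that you explicitly verify that $\C^\otimes$ is locally Kan (via closure of Kan complexes under products and coproducts), whereas the paper simply asserts this fact a few lines above the definition of $\N^\otimes$ without spelling out the argument.
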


	In fact, $p$ defines a monoidal structure on $\N(\C)$ in the following sense:

	\begin{definition}[\cite{dag2}*{1.1.2}]	
		A \fm{monoidal quasicategory} is a coCartesian fibration of simplicial sets $p:X\to N(\Delta^\op)$ such that for each $n \geq 0$, the functors $X_{[n]}\to X_{\{i,i+1\}}$ induced by $\{i, i+1\} \hookrightarrow [n]$ determine an equivalence of quasicategories
		\[
			X_{[n]}\xto{\quad \simeq \quad} X_{\{0,1\}}\times\cdots\times X_{\{n-1,n\}} \cong (X_{[1]})^n,
		\]
		where $X_{[n]}$ denotes the fiber of $p$ over $[n]$.
		In this case, we say that $p$ defines a \fm{monoidal structure on $X_{[1]}$}.
	\end{definition}	

	\begin{proposition}[\cite{dag2}*{Proposition 1.6.3}]
		If $\C$ is a strict monoidal fibrant simplicial category then $p \colon \N^\otimes(\C) \to \N(\Delta^\op)$ defines a monoidal structure on the quasicategory $\N(\C)\cong (\N^\otimes(\C))_{[1]}$. 
	\end{proposition}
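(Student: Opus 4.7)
The plan is to verify the Segal condition directly, showing moreover that the Segal maps are \emph{isomorphisms} (and hence a fortiori equivalences of quasicategories). This proceeds in three steps, each of which amounts to unwinding constructions already established above.

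First, I would identify the fibers of $p$. By Lemma \ref{lem:C-otimes-bullet}, $\C^\otimes \cong \Gr\, \C^\bullet$, so the fiber of $P \colon \C^\otimes \to \Delta^\op$ over $[n]$ is literally $\C^\bullet([n]) = \C^n$. Since the simplicial nerve is a right adjoint and hence preserves pullbacks and products, the fiber of $p = \N(P)$ over the vertex $[n] \in \N(\Delta^\op)$ is $\N(\C^n) \cong \N(\C)^n$. In particular, taking $n=1$ recovers the asserted iso $\N^\otimes(\C)_{[1]} \cong \N(\C)$.

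Second, I would identify the Segal maps on fibers. Each inclusion $\{i, i+1\} \hookrightarrow [n]$ is a morphism $[1] \to [n]$ in $\Delta$, equivalently a morphism $\varphi_i \colon [n] \to [1]$ in $\Delta^\op$. Because we have chosen split coCartesian lifts exhibiting $P$ as the opfibration classifying $\C^\bullet$ under the enriched Grothendieck correspondence \cite{beardswong}*{Theorem 5.6}, the coCartesian transport along $\varphi_i$ for $P$ — and hence, by Proposition \ref{prop:opfibtococart}, for $p$ — is computed by applying $\C^\bullet$. Thus the induced functor on fibers is $\C^{\varphi_i} \colon \C^n \to \C$, and by Remark \ref{rem:Cf}, since $\varphi_i(0) = i$ and $\varphi_i(1) = i+1$, this is the $(i+1)$-st projection $(x_1,\dots,x_n) \mapsto x_{i+1}$.

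Combining these observations, the Segal map
\[
\N^\otimes(\C)_{[n]} \longrightarrow \N^\otimes(\C)_{\{0,1\}} \times \cdots \times \N^\otimes(\C)_{\{n-1,n\}}
\]
is identified with the map $\N(\C^n) \to \N(\C)^n$ assembled from the $n$ coordinate projections, which is the canonical isomorphism. There is essentially no obstacle here; the only delicate point is confirming that coCartesian transport in $\N(\C^\otimes)$ is modeled by the 1-categorical transport of the split opfibration $P$, and this is exactly what our choice of lifts together with Proposition \ref{prop:opfibtococart} and \cite{beardswong}*{Theorem 5.6} give us.
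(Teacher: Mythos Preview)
The paper does not supply its own proof of this proposition; it simply records it as a citation to \cite{dag2}*{Proposition 1.6.3}. Your argument, by contrast, gives a direct verification using machinery already developed in the paper (Lemma \ref{lem:C-otimes-bullet}, Remark \ref{rem:Cf}, Proposition \ref{prop:opfibtococart}), and it is essentially correct: since $\N$ is a right adjoint it preserves the pullback computing the fiber and the product $\C^n$, so $\N^\otimes(\C)_{[n]} \cong \N(\C)^n$, and the transport along each $\{i,i+1\} \hookrightarrow [n]$ is the $(i{+}1)$-st coordinate projection, making the Segal map an isomorphism.

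The only point that deserves a further sentence is precisely the one you flag at the end. Proposition \ref{prop:opfibtococart} tells you that the chosen $P$-coCartesian lifts remain $\N(P)$-coCartesian; what you then need is that \emph{any} functor between fibers arising from a compatible family of coCartesian lifts is a model for the associated transport functor, which is \cite{htt}*{5.2.1} (or equivalently the uniqueness in \cite{htt}*{2.4.4}). Your invocation of \cite{beardswong}*{Theorem 5.6} establishes that the $P$-transport is $\C^{\varphi_i}$, but it does not by itself say anything about how transport interacts with $\N$; that step is carried by Proposition \ref{prop:opfibtococart} plus the quasicategorical uniqueness of coCartesian transport, and it would be worth making that attribution explicit. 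Alternatively, one could route through Theorem \ref{thm:gr-rel-nerve} and the description of fibers and transport for the relative nerve in \cite{htt}*{3.2.5}.
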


	\begin{definition}
		The \fm{quasicategory of monoidal quasicategories} is the full subquasicategory $\MonCat_\infty \subset \coCart_{/\N(\Delta^\op)}$ containing the monoidal quasicategories.
	\end{definition}

	\begin{definition}
		Let $\C$ be a strict monoidal fibrant simplicial category.
		The \fm{vertex associated to $\C$} in $\MonCat_\infty$ or $\coCart_{/\N(\Delta^\op)}$ is the vertex corresponding to $p \colon \N^\otimes(\C) \to \N(\Delta^\op)$.
	\end{definition}

	\begin{remark}
		By Definition \ref{def:cocartqcat}, the vertex associated to $\C$ is equivalently the vertex corresponding to $\N^\otimes(\C)^\natural \to \N(\Delta^\op)^\sharp$ in $\N \big((\sSet^+)_{/S}\big)^\circ$.
		Note that, by \cite{htt}*{3.1.4.1}, the assignment $(X \to S) \mapsto (X^\natural \to S^\sharp)$ is injective up to isomorphism. 
		
		
	\end{remark}

	Finally, we tie together the results of this and the previous sections.

	\begin{corollary} \label{cor:NC-GrC}
		Let $\C$ be a strict monoidal fibrant simplicial category,
		and let $\xi$ be the composite $\Delta^\op \xto{\C^\bullet} \sCat \xto{\N} \sSet$. 
		Then we have the following string of isomorphisms and equivalences:
	\begin{equation} \label{eq:op-rel-nerve}
		\N^\otimes(\C) \cong \N(\Gr\, \C^\bullet) \cong \N_{\xi}(\Delta^\op) \simeq \Gr_\infty\N(\xi).
	\end{equation}
	\end{corollary}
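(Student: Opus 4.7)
The plan is to read (\ref{eq:op-rel-nerve}) as a three-step concatenation in which each relation is obtained by invoking a previously established result applied to the functor $F := \C^\bullet \colon \Delta^\op \to \sCat$ and $f := \xi = \N F$. Proving the corollary thus reduces to verifying the hypotheses of those three results in the present setting and then composing the conclusions.

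First I would apply the simplicial nerve $\N$ to the isomorphism of simplicial categories $\C^\otimes \cong \Gr\, \C^\bullet$ provided by Lemma \ref{lem:C-otimes-bullet}; by functoriality of $\N$, this yields the leftmost isomorphism $\N^\otimes(\C) = \N(\C^\otimes) \cong \N(\Gr\, \C^\bullet)$. Second, I would invoke Theorem \ref{thm:gr-rel-nerve} with $F = \C^\bullet$: since $\xi = \N \C^\bullet$ by definition, the theorem directly delivers the middle isomorphism $\N(\Gr\, \C^\bullet) \cong \N_\xi(\Delta^\op)$. Third, the rightmost equivalence $\N_\xi(\Delta^\op) \simeq \Gr_\infty \N(\xi)$ is an instance of Corollary \ref{cor:gr-infty-gr} (equivalently Proposition \ref{prop:rel-nerve-infty-gr}), whose hypothesis requires each value $\C^\bullet([n]) = \C^n$ to be a locally Kan simplicial category, so that $\N\, \C^n$ is a quasicategory.

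The only substantive check, and hence the main (if minor) obstacle, is this local Kan-ness of $\C^n$. It follows immediately from the standing fibrancy of $\C$: a mapping simplicial set of $\C^n$ between $(x_1,\dots,x_n)$ and $(y_1,\dots,y_n)$ decomposes as the finite product $\prod_{i=1}^n \C(x_i, y_i)$ of Kan complexes, and a finite product of Kan complexes is again a Kan complex. With this single hypothesis verified, the three relations compose directly to yield (\ref{eq:op-rel-nerve}), completing the argument.
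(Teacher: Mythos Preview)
Your proposal is correct and matches the paper's intended argument: the corollary is stated without proof precisely because it is the concatenation of Lemma~\ref{lem:C-otimes-bullet}, Theorem~\ref{thm:gr-rel-nerve}, and Proposition~\ref{prop:rel-nerve-infty-gr}/Corollary~\ref{cor:gr-infty-gr} applied with $F = \C^\bullet$, and your verification that each $\C^n$ is locally Kan (via finite products of Kan complexes) is exactly the hypothesis check needed for the last step.
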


	\begin{remark}
		The preceding Corollary and the $\infty$-categorical Grothendieck correspondence (\ref{cor:gr-corr-infty}) suggest that we may equivalently define a monoidal quasicategory to be $\xi \in (\Cat_\infty)^{\N(\Delta^\op)}$ such that the maps 
		\[
			\xi([n]) \xto{ \xi\left(\{i, i+1\} \hookrightarrow [n]\right) } \xi(\{i,i+1\})
		\]
		induce an equivalence
		\[
			\xi({[n]}) \xto{\quad \simeq \quad} \xi({\{0,1\}}) \times\cdots\times \xi({\{n-1,n\}}) \cong \xi({[1]})^n.
		\]
	\end{remark}

	\begin{remark}
		We have worked entirely on the level of \emph{objects} as we are only interested in understanding the operadic nerve of one monoidal simplicial category at a time.
		However, we believe it should be possible to show that these constructions and equivalences are \emph{functorial}, so that the following diagram is an actual commuting diagram of functors between appropriately defined categories or quasicategories:
		\[
		\begin{tikzcd}
			\sMonCat \ar[r, "(-)^\bullet"] \ar[rr, bend right = 15, "(-)^\otimes"' description, near end] \ar[rrr, bend right = 20, "\N^\otimes"' description, near end]  & \sCat^{\Delta^\op} \ar[r, "\Gr"] & \opFib_{/\Delta^\op} \ar[r, "\N"] & \coCart_{/\N(\Delta^\op)}
		\end{tikzcd}
		\]

		For an ordinary category $\D$, we also believe that there is a model structure on $\sCat_{/\D}$ whose fibrant objects are simplicial opfibrations (or the analog for a suitable version of \textit{marked} simplicial categories), along with a Quillen adjunction between $\sCat_{/D}$ and $(\sSet^+)_{/\N(\D)}$ whose restriction to fibrant objects picks out the maps arising as nerves of simplicial opfibrations. 

		
	\end{remark}

\section{Opposite functors}
\label{sec:op-func}

Finally, we turn to the question which motivated this paper:~how does the operadic nerve interact with taking opposites?

Recall that there is an involution on the category of small categories $\op\colon\Cat\to\Cat$ which takes a category to its opposite. There are higher categorical generalizations of this functor to the category of simplicial sets and the category of simplicially enriched categories, which we review in turn.

\subsection{Opposites of (monoidal) simplicial categories}

\begin{definition}
	Given a simplicial category $\C \in \sCat$, let $\C^\op$ denote the category with the same objects as $\C$, and morphisms
	\[
		\C^\op(x,y) := \C(y,x).
	\]
	Let $\op_s\colon \sCat\to \sCat$ be the functor sending $\C$ to $\op_s(\C) := \C^\op$, and sending a simplicial functor $F$ to the simplicial functor $F^\op$ given by $F^\op x:= Fx$ and $F^\op_{x,y} := F_{y,x}$.
\end{definition}

We note a few immediate properties of opposites.

\begin{lemma} \label{lem:op-s-selfadj}
	The functor $\op_s$ is self-adjoint.
\end{lemma}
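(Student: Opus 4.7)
The plan is to exhibit self-adjointness as a formal consequence of the fact that $\op_s$ is a \emph{strict involution}, i.e.\ that $\op_s \circ \op_s = \mathrm{id}_{\sCat}$ as functors. Indeed, $(\C^\op)^\op$ has the same objects as $\C$ and hom-objects $(\C^\op)^\op(x,y) = \C^\op(y,x) = \C(x,y)$, with composition reversed twice and hence equal to that of $\C$; analogously on morphisms. Given any strict involution, one obtains a self-adjunction by taking both unit $\eta \colon \mathrm{id}_{\sCat} \To \op_s \op_s$ and counit $\varepsilon \colon \op_s \op_s \To \mathrm{id}_{\sCat}$ to be the identity natural transformation, in which case the triangle identities reduce to the identity equation $\op_s = \op_s$ and hold trivially.

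Unpacked as a bijection, the plan is to define maps
\[
    \Phi \colon \sCat(\op_s \C, \D) \rightleftarrows \sCat(\C, \op_s \D) \colon \Psi
\]
by the same ``swap indices'' rule: send a simplicial functor $F$ to the assignment $x \mapsto Fx$ with hom-components $F_{y,x} \colon \C(x,y) = \C^\op(y,x) \to \D(Fy,Fx) = \D^\op(Fx,Fy)$ (and analogously in the other direction). The key step is to check that $\Phi(F)$ is indeed a simplicial functor: preservation of identities is immediate, while preservation of composition holds because reversing the order of composition in both source and target cancels out, so $F$'s functoriality with respect to the reversed composition of $\C^\op$ translates exactly into $\Phi(F)$'s functoriality with respect to the reversed composition of $\D^\op$. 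That $\Phi$ and $\Psi$ are mutually inverse is immediate from the symmetry of the formula.

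Naturality in $\C$ and $\D$ requires checking that precomposition by $G^\op$ (for $G \colon \C' \to \C$) and postcomposition by $H^\op$ (for $H \colon \D \to \D'$) interchange correctly with the swap-indices rule; this is a direct unwinding of $\op_s$ on morphisms. None of the steps presents a real obstacle, since each is simply a manifestation of the involutive character of $(-)^\op$; the only item worth spelling out is the composition check, which amounts to the observation that a double reversal is trivial.
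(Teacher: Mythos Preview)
Your argument is correct. The paper states this lemma without proof, evidently treating it as immediate; your proposal supplies the standard justification (that a strict involution is automatically self-adjoint with identity unit and counit), so there is nothing to compare beyond noting that you have made explicit what the paper leaves tacit.
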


\begin{lemma}
	Let $\C$ be a simplicial category.
	If $\C$ is fibrant, then so is $\C^\op$.
\end{lemma}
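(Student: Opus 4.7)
The plan is to unwind the definition of fibrancy for a simplicial category, which (by the conventions set out in \S 2) means that every mapping simplicial set is a Kan complex, and then observe that the mapping simplicial sets of $\C^\op$ are literally (equal, not just isomorphic to) the mapping simplicial sets of $\C$ with the source and target swapped.

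Concretely, I would fix two objects $x, y \in \C^\op = \Ob(\C)$ and write down the equality $\C^\op(x,y) = \C(y,x)$ that holds by the definition of $\op_s$. Since $\C$ is locally Kan, the right-hand side is a Kan complex, so the left-hand side is a Kan complex as well. Since $x$ and $y$ were arbitrary, every mapping object of $\C^\op$ is a Kan complex, i.e.\ $\C^\op$ is fibrant.

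There is no real obstacle here: the statement is a direct consequence of the fact that passing to the opposite of a simplicially enriched category does not alter the underlying simplicial sets of morphisms, it only relabels their roles as hom-objects. In particular no fibrancy condition on $\sSet$ itself, no use of the adjointness in Lemma \ref{lem:op-s-selfadj}, and no properties of composition are required; the proof is a one-line unwinding of the definitions.
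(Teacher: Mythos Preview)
Your proposal is correct and matches the paper's treatment: the lemma is stated without proof there, precisely because it follows immediately from the definitional equality $\C^\op(x,y)=\C(y,x)$ together with the identification of fibrant simplicial categories with locally Kan ones. Your one-line unwinding is exactly the intended argument.
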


\begin{lemma}
	Let $\C$ be a strict monoidal simplicial category. 
	Then $\C^\op$ is canonically a strict monoidal simplicial category as well. 
\end{lemma}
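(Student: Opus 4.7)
The plan is to deduce the statement formally from the fact that $\op_s \colon \sCat \to \sCat$ is a (strict) monoidal endofunctor of the cartesian monoidal category $(\sCat, \times, *)$. Since monoidal functors preserve monoid objects, and a strict monoidal simplicial category is by definition a monoid in $(\sCat, \times, *)$, this immediately yields the desired structure on $\C^\op$.

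Concretely, I would first verify that there are canonical isomorphisms $(\C \times \D)^\op \cong \C^\op \times \D^\op$ and $*^\op \cong *$ in $\sCat$. The first is just the observation that both sides have the same objects (pairs $(c,d)$) and hom-simplicial-sets
\[
	(\C \times \D)^\op((c,d),(c',d')) = \C(c',c) \times \D(d',d) = \C^\op(c,c') \times \D^\op(d,d'),
\]
and the second is immediate since $*$ has a unique object and trivial hom-object. These isomorphisms are natural in $\C, \D$ and coherent with the associator and unitor of $\times$ (which are themselves identities here), making $\op_s$ a strict monoidal endofunctor.

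Next, given the monoid structure on $\C$ with multiplication $\mu \colon \C \times \C \to \C$ and unit $\eta \colon * \to \C$, I would define the monoid structure on $\C^\op$ by
\[
	\mu^\op \colon \C^\op \times \C^\op \cong (\C \times \C)^\op \xrightarrow{\op_s(\mu)} \C^\op, \qquad \eta^\op \colon * \cong *^\op \xrightarrow{\op_s(\eta)} \C^\op.
\]
On objects this gives $x \otimes^\op y := x \otimes y$, and on hom-simplicial-sets $\mu^\op$ is the map $\C(y,x) \times \C(y',x') \to \C(y \otimes y', x \otimes x')$ induced by $\mu$. The associativity and unit diagrams for $(\mu^\op, \eta^\op)$ are obtained by applying the strict monoidal functor $\op_s$ to the corresponding diagrams for $(\mu, \eta)$, so they commute automatically.

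There is no real obstacle here; the content is entirely in recording that $\op_s$ respects products on the nose, which is what makes the transferred monoid structure strict rather than merely pseudo. I would conclude by noting that $\op_s$ also preserves morphisms of monoids, so it restricts to an involution $\op \colon \sMonCat \to \sMonCat$, which is the form of the statement that will be invoked in Section~\ref{sec:op-func}.
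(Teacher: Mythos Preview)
Your proposal is correct and follows essentially the same route as the paper: the paper's alternative argument also observes that $\op_s$ is a monoidal endofunctor of $(\sCat,\times,*)$ and hence preserves monoids, the only difference being that the paper deduces product-preservation from the self-adjointness of $\op_s$ (Lemma~\ref{lem:op-s-selfadj}) rather than verifying $(\C\times\D)^\op \cong \C^\op\times\D^\op$ by hand as you do.
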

\begin{proof}	
	Given $x,y \in \C^\op$, define their tensor product to be the same object as their tensor in $\C$.
	One can check that this extends to a monoidal structure on $\C^\op$.

	Alternatively, since $\op_s$ is self-adjoint, it preserves limits and colimits of simplicial categories. 
	In particular, it preserves the Cartesian product, and is therefore a monoidal functor from $(\sCat, \times)$ to itself.
	It thus preserves monoids in $\sCat$.
\end{proof}

\begin{remark}
	Since the same object represents the tensor product of $x$ and $y$ in $\C$ or $\C^\op$, we will use the same symbol $\otimes$ to denote the tensor product in either category.
\end{remark}

The functor $\op_s \colon \sCat \to \sCat$ induces functors $(-)^\op \colon \sMonCat \to \sMonCat$ and $(-)^\op \colon \sCat^{\Delta^\op} \to \sCat^{\Delta^\op}$, where the latter is composition with $\op_s$. 
We wish to show that these functors commute with the construction $\C \mapsto \C^\bullet$ of Definition \ref{def:C-bullet}.
\begin{lemma} \label{lem:C-bullet-op}
	Let $\C$ be a strict monoidal simplicial category.
	Then 
	\[
		(\C^\bullet)^\op = (\C^\op)^\bullet,
	\]
	i.e.\ the following diagram commutes on objects.
	\[
		\begin{tikzcd}[row sep = large]
			\sMonCat \ar[r, "(-)^\bullet"] \ar[d, "\op" description] & \sCat^{\Delta^\op} \ar[d, "\op" description] 
			\\
			\sMonCat \ar[r, "(-)^\bullet"] & \sCat^{\Delta^\op} 		
		\end{tikzcd}
\]
\end{lemma}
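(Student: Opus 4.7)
The plan is to verify equality of the two functors $\Delta^\op \to \sCat$ on objects and on generating morphisms (faces and degeneracies), since a simplicial object in $\sCat$ is determined by this data subject to the simplicial identities, which hold on both sides by construction.

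First I would check equality on objects. For each $[n] \in \Delta^\op$, we have $(\C^\bullet)^\op([n]) = \op_s(\C^n) = (\C^n)^\op$, while $(\C^\op)^\bullet([n]) = (\C^\op)^n$. These agree because $\op_s$ is self-adjoint (Lemma \ref{lem:op-s-selfadj}), hence preserves finite products in $\sCat$, giving a canonical identification $(\C^n)^\op \cong (\C^\op)^n$ which is literally the identity on objects and morphisms once one unravels the definition of the Cartesian product of simplicial categories.

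Next I would check that the face and degeneracy functors agree under this identification. The key observation is that the monoidal data of $\C^\op$ is obtained by applying $\op_s$ to the monoidal data of $\C$: because $\op_s$ is monoidal with respect to $\times$, the product $\mu_{\C^\op}$, unit $\eta_{\C^\op}$, and terminal map $\varepsilon_{\C^\op}$ are (up to the canonical identifications above) exactly $\mu_\C^\op$, $\eta^\op$, and $\varepsilon^\op$. Unpacking Definition \ref{def:C-faces}, the functor $(\C^{\delta_i})^\op$ acts on the $i$\textsuperscript{th} and $(i{+}1)$\textsuperscript{st} coordinates of $(\C^n)^\op = (\C^\op)^n$ by $\mu_\C^\op = \mu_{\C^\op}$ (for $0 < i < n$) or by $\varepsilon^\op = \varepsilon_{\C^\op}$ on the $i$\textsuperscript{th} coordinate (for $i \in \{0,n\}$), and as the identity elsewhere. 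This is precisely the definition of $(\C^\op)^{\delta_i}$. The argument for $(\C^{\sigma_i})^\op = (\C^\op)^{\sigma_i}$ is analogous, using that $\eta^\op$ is the unit of $\C^\op$.

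Since both functors agree on objects, face maps, and degeneracy maps, and both satisfy the simplicial identities (the left-hand side automatically because $\op_s$ is a functor applied to a simplicial object, the right-hand side by Definition \ref{def:C-bullet}), we conclude $(\C^\bullet)^\op = (\C^\op)^\bullet$ as functors $\Delta^\op \to \sCat$. The only real subtlety — and the step I would write out most carefully — is the bookkeeping of the canonical isomorphism $(\C \times \C)^\op \cong \C^\op \times \C^\op$, to make sure that applying $\op_s$ to $\mu_\C \colon \C \times \C \to \C$ genuinely produces the functor designated as the monoidal product of $\C^\op$; but this is immediate from the definition of a product of simplicial categories and the pointwise nature of $\op_s$ on morphisms.
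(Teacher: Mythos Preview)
Your proof is correct and follows essentially the same approach as the paper: both verify $(\C^n)^\op = (\C^\op)^n$ on objects and hom-simplicial-sets, and then check that the face and degeneracy functors agree by observing that the monoidal structure maps of $\C^\op$ are precisely $\op_s$ applied to those of $\C$. Your phrasing in terms of $\op_s$ being product-preserving (hence monoidal for $(\sCat,\times)$) is a slightly more categorical packaging of the same explicit check the paper carries out.
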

\begin{proof}
	The objects of both $(\C^n)^\op$ and $(\C^\op)^n$ are $n$-tuples $(x_1, \dots, x_n)$ where $x_i \in \C$, while the simplicial set of morphisms from $(x_1, \dots, x_n)$ to $(y_1,\dots, y_n)$ are both $\C(y_1,x_1) \times \dots \times \C(y_n, x_n)$, so $(\C^n)^\op = (\C^\op)^n$. Therefore $(\C^{op})^\bullet$ and $(\C^\bullet)^{op}$ agree on objects $[n]\in\Delta^{op}$. 
	
	Now consider the face and degeneracy morphisms of $\Delta$ under the two functors $(\C^\bullet)^{op}\colon \Delta^{op}\to\sCat$ and $(\C^{op})^\bullet\colon \Delta^{op}\to\sCat$. In the first case, they are taken to, respectively, an application of the opposite monoidal structure to the $i^{th}$ and $i+1^{st}$ coordinates $(\C^{\delta_i})^{op}\colon (\C^n)^{op}\to(\C^{n-1})^{op}$ and an application of the ``opposite'' unit in the $i^{th}$ coordinate  $(\C^{\sigma_i})^{op}\colon (\C^{n-1})^{op}\to (\C^{n})^{op}$. Because the monoidal structure of $\C^{op}$ is by definition the opposite of the monoidal structure of $\C$, and both the identity maps and the unit maps are self-dual under $op$ (and the fact that $op$ is self-adjoint so preserves products up to equality), it is clear that these are equal to $(\C^{op})^{\delta_i}$ and $(\C^{op})^{\sigma_i}$ respectively.
	
\end{proof}

\begin{remark}
	The diagram above is an actual commuting diagram of functors, but we will not show this here, since we have not fully described the functorial nature of $(-)^\op$.
\end{remark}

We note also that opposites commute with the simplicially enriched Grothendieck construction, but we will not need this result in the rest of the paper.

\begin{definition} \label{def:sfib-op}
	Let $P \colon \E \to \D$ be a simplicial opfibration.
	The \fm{fiberwise opposite} of $P$ is the simplicial opfibration $P_\op \colon \E_\op \to \D$ given by 
	\[
		\Gr \circ  \op_s \circ \Gr^{-1}(P).
	\]
	Note that we have deliberately avoided writing $P^\op$ and $\E^\op$, since these mean the direct application of $\op_s$ to $P$ and $\E$, which is not what we want.
\end{definition}

\begin{corollary}
	Let $\C$ be a strict monoidal simplicial category. Then
	\[
		(\C^\otimes)_\op \cong (\C^\op)^\otimes.
	\]
\end{corollary}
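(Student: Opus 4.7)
The plan is to chain together Lemma \ref{lem:C-otimes-bullet} and Lemma \ref{lem:C-bullet-op} via the definition of the fiberwise opposite. Concretely, starting from the opfibration $P \colon \C^\otimes \to \Delta^\op$, I would first identify $\C^\otimes$ with $\Gr\, \C^\bullet$ using Lemma \ref{lem:C-otimes-bullet}. Under the enriched Grothendieck correspondence \cite{beardswong}*{Theorem 5.6}, this isomorphism identifies $\C^\bullet$ with $\Gr^{-1}(P)$ (up to the coherent choice of coCartesian lifts described just before Definition \ref{def:C-bullet}). By Definition \ref{def:sfib-op}, the fiberwise opposite is then
\[
    (\C^\otimes)_\op \;=\; \Gr\bigl(\op_s \circ \Gr^{-1}(P)\bigr) \;\cong\; \Gr\bigl((\C^\bullet)^\op\bigr).
\]

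Next, I would apply Lemma \ref{lem:C-bullet-op}, which gives the equality $(\C^\bullet)^\op = (\C^\op)^\bullet$ of functors $\Delta^\op \to \sCat$. Substituting this in yields
\[
    (\C^\otimes)_\op \;\cong\; \Gr\bigl((\C^\op)^\bullet\bigr).
\]
Finally, a second application of Lemma \ref{lem:C-otimes-bullet}, now to the strict monoidal simplicial category $\C^\op$ (which is strict monoidal by the lemma preceding Lemma \ref{lem:C-bullet-op}), identifies the right-hand side with $(\C^\op)^\otimes$, completing the chain.

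There is no real obstacle here: everything is essentially a bookkeeping exercise, since all three ingredients (Lemmas \ref{lem:C-otimes-bullet}, \ref{lem:C-bullet-op}, and Definition \ref{def:sfib-op}) have been set up precisely to make the argument go through on the nose. The one point that deserves a sentence of care is the verification that the isomorphism of Lemma \ref{lem:C-otimes-bullet} carries the split opfibration structure on $\C^\otimes$ (with the specific coCartesian lifts $[f; 1_{y_1}, \dots, 1_{y_m}]$) to the canonical opfibration $\Gr\, \C^\bullet \to \Delta^\op$, so that $\Gr^{-1}$ of the former is literally $\C^\bullet$; but this is immediate from Remark \ref{rem:Cf} and the definition of $\Gr$.
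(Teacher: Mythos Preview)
Your proposal is correct and follows essentially the same approach as the paper: the paper's one-line proof says ``Apply $\Gr$ to Lemma \ref{lem:C-bullet-op}, and note that $\Gr\, (\C^\bullet)^\op \cong (\C^\otimes)_\op$,'' which is exactly your chain of Lemma \ref{lem:C-otimes-bullet}, Definition \ref{def:sfib-op}, Lemma \ref{lem:C-bullet-op}, and Lemma \ref{lem:C-otimes-bullet} again, just compressed. Your extra sentence about the split opfibration structure is a reasonable point of care that the paper leaves implicit.
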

\begin{proof}
	Apply $\Gr$ to Lemma \ref{lem:C-bullet-op}, and note that $\Gr\, (\C^\bullet)^\op \cong (\C^\otimes)_\op$.
\end{proof}

\subsection{Opposites of $\infty$-categories}

We now turn to opposites of simplicial sets and quasicategories, and relate these to opposites of simplicial categories.
In this and the next subsection, we will make frequent use of the notation and results of \ref{sec:models} and \ref{sec:st-un}, so the reader is encouraged to review them before proceeding.

To avoid unnecessary complexity in our exposition and proofs, we will freely use the fact that $\Delta$, the simplex category, is equivalent to the category $\cat{floSet}$ of finite, linearly ordered sets and order preserving functions between them. In fact, $\Delta$ is a \textit{skeleton} of $\cat{floSet}$, so the equivalence is given by the inclusion $\Delta\hookrightarrow\cat{floSet}$.

\begin{definition}
	Define the functor $\rev\colon\Delta\to \Delta$ to be the functor that takes a finite linearly ordered set to the same set with the reverse ordering. Then given $X\in \sSet = \mathsf{Fun}(\Delta,\cat{Set})$, we define $\op_\Delta X$ to be the simplicial set $X\circ \rev$. This defines a functor $\op_\Delta\colon \sSet\to \sSet$. We will often write $X^\op$ instead of $\op_\Delta X$. 
\end{definition}

\begin{definition}
	Define the functor $\op_\Delta^+\colon \sSet^+\to \sSet^+$ to be the functor that takes a marked simplicial set $(X,W)$ to $(\op_\Delta X,W)$, where we use the fact that there is a bijection between the 1-simplices of $\op_\Delta X$ and those of $X$.
\end{definition}

\begin{lemma}\label{lem:opselfadj}
	The functors $\op_\Delta$ and $\op_\Delta^+$ are self-adjoint.
\end{lemma}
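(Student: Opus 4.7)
The plan is to observe that $\rev \colon \Delta \to \Delta$ is a strict involution and then leverage the formal fact that a strict involution on a category is canonically self-adjoint. First I would verify that $\rev \circ \rev = \mathrm{id}_\Delta$: reversing the ordering on a finite linearly ordered set twice returns the original ordering, and an order-preserving function is the same underlying function whether viewed between the original orderings or between the reversed orderings. Consequently $\op_\Delta = (-)\circ \rev$ satisfies $\op_\Delta \circ \op_\Delta = \mathrm{id}_{\sSet}$, and similarly $\op_\Delta^+ \circ \op_\Delta^+ = \mathrm{id}_{\sSet^+}$, since the marking $W$ is preserved under the canonical identification $X_1 \cong (\op_\Delta X)_1$ that the paper already records in the definition of $\op_\Delta^+$.

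Next I would invoke the general principle that any isomorphism of categories $F \colon \C \to \C$ satisfying $F \circ F = \mathrm{id}_\C$ strictly is self-adjoint, with both unit and counit taken to be the identity natural transformation on $\mathrm{id}_\C$; the triangle identities then become tautologies. Applied to $\op_\Delta$, this produces a natural bijection
\[
	\Hom_{\sSet}(\op_\Delta X,\, Y) \xto{\;\sim\;} \Hom_{\sSet}(X,\, \op_\Delta Y), \qquad f \mapsto \op_\Delta f,
\]
which is a bijection because $\op_\Delta$ is an isomorphism on hom-sets and $\op_\Delta \op_\Delta X = X$, $\op_\Delta \op_\Delta Y = Y$ hold on the nose. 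The identical recipe applies to $\op_\Delta^+$, so both functors are self-adjoint as claimed.

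There is essentially no obstacle here: the proof is purely formal. The only mild sanity check is to confirm well-definedness of $\op_\Delta^+$, which reduces to observing that the non-identity automorphism of $[1]\in \Delta$ induced by $\rev$ gives a bijection on $1$-simplices that identifies the marked subsets on either side, so that assigning $(X,W) \mapsto (\op_\Delta X, W)$ really does define a functor $\sSet^+ \to \sSet^+$.
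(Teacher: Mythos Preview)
Your proposal is correct and is exactly the standard elementary argument one would give; the paper itself states this lemma without proof, treating it as immediate from the fact that $\rev$ (and hence $\op_\Delta$ and $\op_\Delta^+$) is a strict involution.
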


\begin{lemma}
	If $X$ is a quasicategory, then so is $X^\op$.
\end{lemma}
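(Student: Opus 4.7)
The plan is to check the inner horn filling condition directly in $X^\op$ and transport each filling problem to one in $X$ via the self-adjunction of Lemma \ref{lem:opselfadj}. A quasicategory is a simplicial set in which every inner horn $\Lambda^n_k \hookrightarrow \Delta^n$ (with $0 < k < n$) admits a filler, so this is the only property that needs to be verified for $X^\op$.

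The key computation is to identify the image of the standard simplices and horns under $\op_\Delta$. Since $\op_\Delta$ is induced by $\rev \colon \Delta \to \Delta$, and $\rev$ fixes each finite linearly ordered set up to the unique order-reversing bijection, I would first check that there is a natural isomorphism $(\Delta^n)^\op \cong \Delta^n$ under which the $i$-th face inclusion $d_i \colon \Delta^{n-1} \hookrightarrow \Delta^n$ corresponds to the $(n-i)$-th face inclusion. Consequently, the subcomplex $\Lambda^n_k \subseteq \Delta^n$ (generated by all faces except the $k$-th) is sent to the subcomplex generated by all faces except the $(n-k)$-th, giving a canonical isomorphism $(\Lambda^n_k)^\op \cong \Lambda^n_{n-k}$ over $(\Delta^n)^\op \cong \Delta^n$.

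Next, I would apply Lemma \ref{lem:opselfadj}: any square consisting of an inner horn $\Lambda^n_k \to X^\op$ and its bounding $n$-simplex data in $X^\op$ corresponds bijectively, via the self-adjunction of $\op_\Delta$, to a square with $(\Lambda^n_k)^\op \to X$ and $(\Delta^n)^\op \to X$, which by the previous paragraph is the data of an inner horn $\Lambda^n_{n-k} \to X$ together with a candidate $\Delta^n \to X$. Since $0 < k < n$ if and only if $0 < n-k < n$, this is genuinely an inner horn filling problem in $X$. Fillers correspond under the adjunction, so the assumption that $X$ is a quasicategory supplies the required filler in $X^\op$.

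No obstacle is expected here; the only slightly delicate point is the book-keeping identification $(\Lambda^n_k)^\op \cong \Lambda^n_{n-k}$, which follows from the description of $\rev$ on face maps and the fact that horns are defined as unions of faces.
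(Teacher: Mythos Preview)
The paper states this lemma without proof, treating it as well-known. Your argument is correct and is exactly the standard one: $\op_\Delta$ carries the inner horn inclusion $\Lambda^n_k \hookrightarrow \Delta^n$ to $\Lambda^n_{n-k} \hookrightarrow \Delta^n$, and since $0<k<n$ iff $0<n-k<n$, lifting problems for inner horns in $X^\op$ correspond bijectively (via Lemma~\ref{lem:opselfadj}) to lifting problems for inner horns in $X$.

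One small wording issue: a horn-filling problem is simply a map $\Lambda^n_k \to X^\op$ for which one seeks an extension along $\Lambda^n_k \hookrightarrow \Delta^n$; there is no separate ``bounding $n$-simplex data'' to track. With that phrasing cleaned up, your proof is complete.
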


The functors $\op_s, \op_\Delta$ and $\op_\Delta^+$ are related in the following manner:

\begin{lemma}\label{lem:opcommute}
The following diagram commutes:
\begin{center}
	\begin{tikzcd}[sep = large]
		\sCat^\circ \ar[r,"\N"]\ar[d,"\op_s"'] & \sSet^\circ\ar[d,"\op_\Delta" description]\ar[r,"\natural"] & \sSet^+\ar[d, "\op_\Delta^+"]\\
		\sCat^\circ \ar[r,"\N"]& \sSet^\circ\ar[r,"\natural"] & \sSet^+ 
	\end{tikzcd}
\end{center}
\end{lemma}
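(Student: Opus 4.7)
The plan is to verify the two squares of the diagram separately.

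For the left square, it suffices to exhibit a natural isomorphism $\N(\C^\op) \cong \op_\Delta \N(\C)$ for $\C \in \sCat$. Unpacking $n$-simplices gives
\[
(\op_\Delta \N\C)_n = (\N\C)_{\rev[n]} = \sCat(\CC[\Delta^{\rev[n]}], \C),
\]
while by the self-adjointness of $\op_s$ (Lemma \ref{lem:op-s-selfadj}),
\[
\N(\C^\op)_n = \sCat(\CC[\Delta^n], \C^\op) \cong \sCat(\CC[\Delta^n]^\op, \C).
\]
So I reduce to producing a natural isomorphism $\CC[\Delta^{\rev J}] \cong \CC[\Delta^J]^\op$ of simplicial categories, natural in $J \in \cat{floSet}$. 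This should follow from the bead description recorded in Lemma \ref{lem:simplicial-functor}: for $i \leq j$ in $J$, the simplices of $\CC[\Delta^J](i,j)$ are indexed by bead shapes $\bead{I_0 | \dots | I_r}$ with endpoints $i$ and $j$, and reversing the order on $J$ converts these to bead shapes from $j$ to $i$ in $\rev J$, intertwining composition with its opposite. Naturality in $\C$ and simplicial functors is immediate from the construction.

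For the right square, recall that $\natural$ takes a quasicategory $X$ to the marked simplicial set $(X, E(X))$, where $E(X)$ is the set of equivalences in $X$. By the definitions of $\op_\Delta$ and $\op_\Delta^+$, both composites produce a marked simplicial set whose underlying simplicial set is $\op_\Delta X$, so the content is that $E(\op_\Delta X) = E(X)$ under the canonical bijection of 1-simplices. This is straightforward: an edge is an equivalence iff it is invertible in the homotopy category $\mathrm{h}X$, and $\mathrm{h}(\op_\Delta X) = (\mathrm{h}X)^\op$ has the same invertible morphisms.

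The main obstacle is the natural isomorphism $\CC[\Delta^{\rev J}] \cong \CC[\Delta^J]^\op$. Although the idea—that reversing the linear order on $J$ corresponds to taking the opposite simplicial category—is transparent, a careful verification requires unpacking the bead construction on each mapping space and checking that the bijection of bead shapes intertwines the simplicial face/degeneracy operators and composition correctly. Once this is established, the remainder of the proof consists of routine adjunction manipulations and a short unpacking of the definition of $\natural$.
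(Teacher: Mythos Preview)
Your proposal is correct and follows essentially the same route as the paper: both reduce the left square, via the $\CC \dashv \N$ adjunction and self-adjointness of the opposite functors, to the natural identification $\CC[\Delta^{\rev J}] \cong \CC[\Delta^J]^\op$, and treat the right square as immediate. The only differences are cosmetic: the paper simply asserts the right square ``obviously commutes'' where you spell out the equivalence argument, and the paper says the key identification on $\CC$ is ``not hard to check from definitions'' (meaning the explicit poset-of-subsets description in \cite{htt}*{1.1.5.1}) where you propose to verify it via bead shapes---which works but is heavier than necessary.
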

\begin{proof}
	The right hand square of the above diagram obviously commutes, so it only remains to show that $\N\circ \op_s\cong \op_\Delta\circ \N$.
	Recall that the nerve of a simplicial category $\C$ is the simplicial set determined by the formula 
	\[
		\Hom_{\sSet}(\Delta^n,\N \C) = \Hom_{\sCat}(\CC[\Delta^n],\C)
	\] 
	where $\CC[\Delta^n]$ is the value of the functor $\CC\colon \Delta\to \sCat$ defined in \cite{htt}*{1.1.5.1, 1.1.5.3} at the finite linearly ordered set $\{0<1<\cdots<n\}$. 
	Moreover, by extending along the Yoneda embedding $\Delta\to \sSet$, we obtain (cf.~the discussion following Example 1.1.5.8 of \cite{htt}) a colimit preserving functor $\CC\colon\sSet\to \sCat$ which is left adjoint to $\N$. This justifies using the notation $\CC[\Delta^n]$ for the application of $\CC$ to $\{0<1<\cdots<n\}$. 
	It is not hard to check from definitions that, for any finite linearly ordered set $I$ the simplicial categories $\CC[I]^\op$ and $\CC[I^\op]$ are equal and that this identification is natural with respect to the morphisms of $\Delta$.  
	So by using this fact, the fact that $\CC\dashv\N$, and liberally applying the self-adjointness of $\op_s, \op_\Delta$ and $\op_\Delta^+$ (Lemmas \ref{lem:op-s-selfadj} and \ref{lem:opselfadj}), we have the following sequence of isomorphisms:
	
	\begin{align*}
	\Hom_{\sSet}(\Delta^n,\N(\C)^\op)&\cong \Hom_{\sSet}((\Delta^n)^\op,\N(\C))\\
	&\cong \Hom_{\sCat}(\CC[(\Delta^n)^\op],\C)\\
	&\cong \Hom_{\sCat}(\CC[\Delta^{n}]^\op,\C)\\
	&\cong \Hom_{\sCat}(\CC[\Delta^n],\C^\op)\\
	&\cong \Hom_{\sSet}(\Delta^n,\N(\C^\op)).
	\end{align*}
	
	\noindent All of our constructions are natural with respect to the morphisms of $\Delta$, so we have the result.
\end{proof}
%

%
%

\begin{corollary}\label{cor:F-f-op}
	Let $F \colon \D \to \sCat$ be a functor such that each $Fd$ is fibrant, and let $f = \N F$.
	Then 
	\[
		f^\op = (\N F)^\op \cong \N(F^\op).
	\]
\end{corollary}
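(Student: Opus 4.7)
The plan is to simply apply Lemma \ref{lem:opcommute} pointwise over $\D$. Unpacking definitions, for each $d \in \D$, the functor $f^\op = \op_\Delta \circ \N \circ F$ takes $d$ to $\op_\Delta \N(Fd)$, while $\N(F^\op) = \N \circ \op_s \circ F$ takes $d$ to $\N((Fd)^\op)$. Lemma \ref{lem:opcommute} gives a natural isomorphism $\op_\Delta \circ \N \cong \N \circ \op_s$ of functors $\sCat \to \sSet$, so instantiating at $Fd$ produces an isomorphism $\op_\Delta \N(Fd) \cong \N((Fd)^\op)$ for each $d$.

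All that remains is to verify that these componentwise isomorphisms assemble into a natural isomorphism of functors $\D \to \sSet$. But this is automatic: the isomorphism supplied by Lemma \ref{lem:opcommute} is natural in its input simplicial category, so for every morphism $\varphi \colon d \to d'$ in $\D$, naturality applied to the simplicial functor $F\varphi \colon Fd \to Fd'$ yields the required commuting square. Whiskering the natural isomorphism of Lemma \ref{lem:opcommute} on the right with $F$ thus directly gives the desired natural isomorphism $f^\op \cong \N(F^\op)$.

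The only mildly subtle point is that the fibrancy hypothesis on each $Fd$ plays no essential role in the argument: Lemma \ref{lem:opcommute} holds for all simplicial categories, and the corollary is a purely formal consequence. The hypothesis is retained only because in the applications to follow, we will want $\N(Fd)$ and $\N((Fd)^\op)$ to be quasicategories, so that $f$ and $\N(F^\op)$ are bona fide diagrams of quasicategories to which Proposition \ref{prop:rel-nerve-infty-gr} applies. No step here is a genuine obstacle; the corollary is essentially a bookkeeping consequence of Lemma \ref{lem:opcommute}.
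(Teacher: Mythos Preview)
Your proposal is correct and is exactly the intended argument: the paper states this as a bare corollary of Lemma~\ref{lem:opcommute}, and whiskering the natural isomorphism $\op_\Delta \circ \N \cong \N \circ \op_s$ with $F$ is precisely how one extracts it. One small caveat: Lemma~\ref{lem:opcommute} is stated for $\sCat^\circ$ rather than all of $\sCat$, so the fibrancy hypothesis is what lets you invoke it as written---your observation that the lemma's \emph{proof} goes through without fibrancy is correct, but strictly speaking the hypothesis is not superfluous given how the lemma is formulated.
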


\begin{corollary}
	Let $f \colon \D \to \sSet$ be a functor such that each $fd$ is a quasicategory.
	Then
	\[
		(f^\op)^\natural = (f^\natural)^\op.
	\]
\end{corollary}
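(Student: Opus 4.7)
The plan is to check this equality pointwise on objects of $\D$ and then argue naturality. On an object $d \in \D$, we have $(f^\op)^\natural(d) = ((fd)^\op)^\natural$ and $(f^\natural)^\op(d) = ((fd)^\natural)^\op$, so it suffices to establish, for each quasicategory $X = fd$, the equality of marked simplicial sets
\[
    (X^\op)^\natural = (X^\natural)^\op.
\]
Both sides have underlying simplicial set $X^\op = X \circ \rev$, so the content is the equality of the marking. Unwinding Definitions, the left side marks those $1$-simplices of $X^\op$ which are equivalences in the quasicategory $X^\op$, while the right side, by the definition of $\op_\Delta^+$, marks those $1$-simplices of $X^\op$ which (under the canonical bijection with $1$-simplices of $X$) are equivalences in $X$.

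The key step is therefore the following observation: an edge $e \colon x \to y$ in a quasicategory $X$ is an equivalence if and only if the corresponding edge $e^\op \colon y \to x$ in $X^\op$ is an equivalence. This is essentially formal: being an equivalence is defined in terms of the existence of a homotopy inverse together with appropriate $2$-simplex witnesses, and this notion is manifestly self-dual under reversal of $1$-simplices. Concretely, a right (resp. left) inverse to $e$ in $X$ is precisely a left (resp. right) inverse to $e^\op$ in $X^\op$, with the witnessing $2$-simplices exchanged accordingly under $\rev$. One may alternatively invoke Joyal's characterization that $e$ is an equivalence in $X$ iff it becomes an isomorphism in the homotopy category $\mathrm{ho}(X)$, combined with the evident identification $\mathrm{ho}(X^\op) = \mathrm{ho}(X)^\op$, where isomorphisms correspond bijectively under reversal of arrows.

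Given this pointwise equality, naturality in $d$ is automatic: for a morphism $\varphi \colon d \to d'$ in $\D$, both $(f^\op)^\natural(\varphi)$ and $(f^\natural)^\op(\varphi)$ act on the common underlying simplicial set $(fd)^\op$ by the same map, namely the simplicial map induced by $f(\varphi) \colon fd \to fd'$ precomposed with $\rev$, which preserves the marking since simplicial maps between quasicategories send equivalences to equivalences. I do not anticipate a serious obstacle; the only point requiring care is the self-duality of ``equivalence in a quasicategory,'' and this is a short direct verification (or a one-line appeal to the homotopy category).
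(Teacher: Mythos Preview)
Your proposal is correct and is essentially the same as the paper's approach, only more explicit. The paper gives no proof for this corollary: it is meant to follow immediately from the right-hand square of Lemma~\ref{lem:opcommute} (relating $\natural$, $\op_\Delta$, and $\op_\Delta^+$), which the paper declares ``obviously commutes.'' Your argument is precisely the unpacking of why that square commutes---reducing to the self-duality of the notion of equivalence in a quasicategory---so you have supplied the detail the paper elided.
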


The preceding Corollary is about functors $\D \to \sSet$ taking values in quasicategories.
Taking the nerve of such a functor, we obtain a \emph{vertex} in the quasicategory $(\Cat_\infty)^{\N(\D)}$. 
From now on, we restrict ourselves to the quasicategories $\Cat_\infty, (\Cat_\infty)^{\N(\D)}$ and $\coCart_{/\N(\D)}$, so that all future statements are about \emph{vertices} in these quasicategories.

By \cite[Theorem 7.2]{barwickschommerpries}, there is a unique-up-to-homotopy non-identity involution of the quasicategory $\Cat_\infty$, as it is a theory of $(\infty,1)$-categories. 
Thus, this involution, which we denote $\op_{\infty}$, must be equivalent to the nerve of $\op_\Delta^+$. So we have the following lemma:

\begin{lemma}
	Let $\op_\infty \colon \Cat_\infty \to \Cat_\infty$ denote the above involution on $\Cat_\infty$.
	Then $\op_\infty \simeq \N(\op_\Delta^+)$.
\end{lemma}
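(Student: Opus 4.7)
The plan is to invoke the uniqueness result of Barwick--Schommer-Pries directly. To do so, I need to verify two things about $\N(\op_\Delta^+)$: that it is a well-defined self-equivalence of the quasicategory $\Cat_\infty$ which squares to the identity, and that it is not itself equivalent to the identity. Granting these, the cited theorem immediately forces $\N(\op_\Delta^+) \simeq \op_\infty$.

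First I would check that $\op_\Delta^+$ descends to an endofunctor of $\Cat_\infty$. Recall that $\Cat_\infty$ is constructed as the simplicial nerve $\N((\sSet^+)^\circ)$ of the simplicially enriched category of fibrant-cofibrant objects for the Cartesian model structure on $\sSet^+$ (see \ref{sec:models}). Because $\op_\Delta^+$ sends a marked simplicial set $(X,W)$ to $(X \circ \rev, W)$ and $\rev \colon \Delta \to \Delta$ is an isomorphism of categories, $\op_\Delta^+$ is an automorphism of the underlying category $\sSet^+$; in particular it sends monomorphisms to monomorphisms, and by self-adjointness (Lemma \ref{lem:opselfadj}) it preserves both limits and colimits. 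It is straightforward (if tedious) to verify that it preserves marked anodyne maps, Cartesian fibrations, and the Cartesian equivalences, since all of these notions are symmetric in the orientation of simplices. Thus $\op_\Delta^+$ restricts to a simplicial endofunctor of $(\sSet^+)^\circ$, and applying $\N$ yields $\N(\op_\Delta^+) \colon \Cat_\infty \to \Cat_\infty$.

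Next, at the point-set level we have the strict equality $\op_\Delta^+ \circ \op_\Delta^+ = \id_{\sSet^+}$, because reversing a linear order twice recovers the original order. Since $\N$ is functorial, this gives $\N(\op_\Delta^+) \circ \N(\op_\Delta^+) = \id_{\Cat_\infty}$, so $\N(\op_\Delta^+)$ is an honest involution and in particular an equivalence. To see it is not the identity, it suffices to produce a single quasicategory $X$ with $X^\op \not\simeq X$; for instance, the nerve of any ordinary small category that is not equivalent to its opposite (e.g.\ a poset with non-trivial directionality) descends to such an example, since equivalence in $\Cat_\infty$ restricts on nerves of $1$-categories to ordinary categorical equivalence.

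The main (and only real) obstacle is the bookkeeping in the first paragraph: systematically checking that reversing orientation preserves each piece of the Cartesian model structure on $\sSet^+$, so that $\op_\Delta^+$ really does define a simplicial endofunctor of $(\sSet^+)^\circ$. Once that is in hand, the conclusion follows by citing \cite{barwickschommerpries}*{Theorem 7.2}: the only non-identity involution of $\Cat_\infty$ up to homotopy is $\op_\infty$, so $\N(\op_\Delta^+) \simeq \op_\infty$.
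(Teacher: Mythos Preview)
Your proposal is correct and follows essentially the same approach as the paper: the paper's argument (contained in the paragraph immediately preceding the lemma rather than in a proof environment) simply invokes \cite{barwickschommerpries}*{Theorem 7.2} to conclude that $\N(\op_\Delta^+)$, being a non-identity involution of $\Cat_\infty$, must be equivalent to $\op_\infty$. You supply more of the details the paper leaves implicit --- that $\op_\Delta^+$ restricts to a simplicial endofunctor of $(\sSet^+)^\circ$, that it squares to the identity, and that it is not equivalent to the identity --- but the core idea is identical.
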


\begin{corollary}\label{cor:markedop}
	Let $f \colon \D \to \sSet$ be a functor such that each $fd$ is a quasicategory, and continue to write $f$ for $f^\natural \colon \D \to \sSet^+$.
	In the quasicategory $(\Cat_\infty)^\D$, we have an equivalence 
	\[
		\N(f^\op) \simeq \N(f)^\op,
	\]
	where $f^\op = \op_\Delta^+ \circ f$ and $\N(f)^\op = \op_\infty \circ \N(f)$.
\end{corollary}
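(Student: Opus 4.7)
The plan is to reduce the desired equivalence to the immediately preceding lemma, $\op_\infty \simeq \N(\op_\Delta^+)$, via strict functoriality of the simplicial nerve. Unpacking definitions, $\N(f)^\op$ means $\op_\infty \circ \N(f)$ while $\N(f^\op)$ means $\N(\op_\Delta^+ \circ f)$, so the task is to compare these two postcompositions.

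The first step is to observe that $\N$, applied at the appropriate enlarged-universe level so that $\sSet^+$ (or its locally Kan full subcategory on fibrant marked simplicial sets) becomes a large simplicial category whose nerve contains $\Cat_\infty$, is an ordinary functor and therefore preserves composition of $0$-cells on the nose. Viewing $f$ as a functor $\D \to \sSet^+$ and $\op_\Delta^+$ as an endofunctor of $\sSet^+$, this gives the strict equality
\[
    \N(\op_\Delta^+ \circ f) \;=\; \N(\op_\Delta^+) \circ \N(f)
\]
of vertices in $(\Cat_\infty)^\D$.

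The second step is to transport the equivalence from the preceding lemma along precomposition with $\N(f)$. Precomposition defines a morphism of quasicategories $(\Cat_\infty)^{\Cat_\infty} \to (\Cat_\infty)^\D$, and every morphism of quasicategories sends equivalences to equivalences. Applying this to $\op_\infty \simeq \N(\op_\Delta^+)$ yields
\[
    \op_\infty \circ \N(f) \;\simeq\; \N(\op_\Delta^+) \circ \N(f)
\]
in $(\Cat_\infty)^\D$. Concatenating this equivalence with the strict equality above gives $\N(f)^\op \simeq \N(f^\op)$, as desired.

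The main obstacle is not conceptual but rather bookkeeping. One must be careful that the composition $\op_\Delta^+ \circ f$ and its image under $\N$ are interpreted at the same size level as the vertices of $(\Cat_\infty)^{\Cat_\infty}$ appearing in the previous lemma, so that the strict equality and the transported equivalence genuinely live in the same quasicategory and can be concatenated. Once this is arranged, no further work is needed, since the combinatorial content — that $\N$ intertwines $\op_s$, $\op_\Delta$, and $\op_\Delta^+$ — has already been extracted in Lemma \ref{lem:opcommute} and the subsequent identification of $\op_\infty$ with $\N(\op_\Delta^+)$.
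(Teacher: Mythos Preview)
Your proposal is correct and follows essentially the same approach as the paper: use functoriality of the (large) simplicial nerve to rewrite $\N(f^\op)$ as $\N(\op_\Delta^+)\circ\N(f)$, then invoke the preceding lemma $\op_\infty \simeq \N(\op_\Delta^+)$ to conclude. Your version simply makes explicit the step (transporting the equivalence along precomposition with $\N(f)$) and the size considerations that the paper leaves implicit.
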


\begin{proof}
	By the functoriality of the (large) simplicial nerve functor and the previous Lemma, we have $\N(f^\op) \simeq \N(op_\Delta^+)\circ \N(f) \simeq \op_\infty \circ \N(f)$. 
\end{proof}

\subsection{Opposites of fibrations and monoidal quasicategories}

We now define fiberwise opposites of a coCartesian fibration, in a manner similar to Definition \ref{def:sfib-op}, keeping in mind that we need to work within the quasicategory $\coCart_{/S}$.

\begin{definition} \label{def:cocartfib-op}
	Let $p \colon X \to S$ be a coCartesian fibration of quasicategories, treated as a vertex of $\coCart_{/S}$.
	The \fm{fiberwise opposite} of $p$ is the coCartesian fibration corresponding to the vertex 
	\[
		\Gr_\infty \circ \op_\infty \circ \Gr_\infty^{-1} (p) \in \coCart_{/S}.
	\]
	Denote this coCartesian fibration by $p_\op \colon X_\op \to S$.
	(Again, we do not write $p^\op$ or $X^\op$, since these refer to the direct application of $\op_\Delta^{+}$).
\end{definition}

\begin{theorem} \label{thm:F-op-commute}
	Let $F \colon \D \to \sCat$ be a functor such that each $Fd$ is fibrant.
	In the quasicategory $\coCart_{/\N(\D)}$, there is an equivalence of vertices 
	\[
		\N \Gr (F^\op) \simeq \N\Gr \, (F)_\op,
	\]
	i.e.~the following diagram commutes on objects, and up to equivalence in $\coCart_{/\N(\D)}$.
	\[
		\begin{tikzcd}[row sep = large]
			\sCat^{\D} \ar[r, "\Gr"] \ar[d, "\op" description] & \opFib_{/\D} \ar[r, "\N"]  
			& \coCart_{/\N(\D)} \ar[d, "\op" description]
			\\
			\sCat^{\D} \ar[r, "\Gr"] & \opFib_{/\D} \ar[r, "\N"] & \coCart_{/\N(\D)}		
		\end{tikzcd}
	\]
\end{theorem}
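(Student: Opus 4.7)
The plan is to chain together the results already established, using Corollary \ref{cor:gr-infty-gr} to identify $\N \Gr(-)$ with $\Gr_\infty \N(\N(-))$, and then to transport the statement to an equivalence in $(\Cat_\infty)^{\N(\D)}$ where it becomes the content of Corollary \ref{cor:markedop}. Throughout, I will treat everything as vertices of the relevant quasicategories, since $\Gr_\infty$ is only defined up to equivalence and $\op_\infty$ is characterized only up to homotopy.

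First, I would unwind both sides. For the left-hand side, write $f = \N F$ and $f^\op = \N(F^\op)$; by Corollary \ref{cor:F-f-op} the latter agrees (up to isomorphism) with $\op_\Delta^+ \circ f = \N(\op_s) \circ f$ via Lemma \ref{lem:opcommute}. Applying Corollary \ref{cor:gr-infty-gr} to the functor $F^\op \colon \D \to \sCat$ (whose values are again fibrant, so each $\N(F^\op d) = (Fd)^\op$ is a quasicategory) gives an equivalence
\[
    \N \Gr(F^\op) \;\simeq\; \Gr_\infty \N(f^\op)
\]
of vertices in $\coCart_{/\N(\D)}$. Invoking Corollary \ref{cor:markedop}, $\N(f^\op) \simeq \op_\infty \circ \N(f)$ as vertices in $(\Cat_\infty)^{\N(\D)}$, so
\[
    \N \Gr(F^\op) \;\simeq\; \Gr_\infty\bigl(\op_\infty \circ \N(f)\bigr).
\]

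Second, I would unwind the right-hand side. By Definition \ref{def:cocartfib-op}, the fiberwise opposite of the coCartesian fibration $\N \Gr(F) \to \N(\D)$ is the vertex $\Gr_\infty \circ \op_\infty \circ \Gr_\infty^{-1}(\N \Gr F)$ in $\coCart_{/\N(\D)}$. Applying Corollary \ref{cor:gr-infty-gr} again, $\Gr_\infty^{-1}(\N \Gr F) \simeq \N(f)$ in $(\Cat_\infty)^{\N(\D)}$, and hence
\[
    \bigl(\N \Gr F\bigr)_\op \;\simeq\; \Gr_\infty\bigl(\op_\infty \circ \N(f)\bigr).
\]
Comparing the two displayed equivalences yields $\N \Gr(F^\op) \simeq (\N \Gr F)_\op$.

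The step I expect to require most care is the application of Corollary \ref{cor:markedop}: it asserts an equivalence of functors in the quasicategorical functor category $(\Cat_\infty)^{\N(\D)}$, and I need that applying $\Gr_\infty$ to equivalent vertices produces equivalent vertices in $\coCart_{/\N(\D)}$. This is immediate because $\Gr_\infty$ is an equivalence of quasicategories (by definition, the unstraightening from \cite{htt}*{3.2.0.1}), so it preserves (and reflects) equivalences of vertices. The remaining pieces are then purely formal chases through the definitions, and no further calculations or model-categorical arguments are needed beyond those already developed in the previous sections.
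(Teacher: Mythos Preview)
Your proposal is correct and follows essentially the same route as the paper: both arguments unwind each side via Corollary~\ref{cor:gr-infty-gr} to land in $(\Cat_\infty)^{\N(\D)}$, invoke Definition~\ref{def:cocartfib-op} and the weak inverse $\Gr_\infty^{-1}$ for the fiberwise opposite, and appeal to Corollary~\ref{cor:markedop} (together with Corollary~\ref{cor:F-f-op}) to identify $\N(f^\op)$ with $\op_\infty\circ\N(f)$. The only cosmetic difference is that the paper presents the argument as a single chain of equivalences from $(\N\Gr F)_\op$ to $\N\Gr(F^\op)$, whereas you compute both sides separately and compare.
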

\begin{proof} 
	We have a string of equivalences:	
	\begin{align*}
	\N \Gr\, (F)_\op &=  { \Gr_\infty \circ \op_\infty \circ \Gr_\infty^{-1} }(\N\Gr \, (F)) & & \text{(Definition \ref{def:cocartfib-op})} \\
	&\simeq \Gr_\infty \circ \op_\infty \circ \Gr_\infty^{-1} \Gr_\infty \N (f) & & \text{(Corollary \ref{cor:gr-infty-gr})}\\ 
	&\simeq \Gr_\infty \circ \op_\infty \circ \N(f) & &\text{(Definition \ref{def:grinfinity})} \\
	&\simeq \Gr_\infty \N( {f^\op}) & & \text{(Corollary \ref{cor:markedop})} \\
	&\simeq \N\Gr (F^\op) & & \text{(Corollary \ref{cor:gr-infty-gr})}
	\end{align*}
	where $f = \N F$ and $f^\op \cong \N(F^\op)$ by Corollary \ref{cor:F-f-op}.
\end{proof}

\begin{remark}
	The reader following the above proof closely should be aware of the fact that we implicitly use  Proposition \ref{prop:rel-nerve-infty-gr} \cite{htt}*{3.2.5.21} several times. 
\end{remark}

Finally, we turn our attention back to monoidal quasicategories and monoidal simplicial categories.

\begin{lemma}
	Let $p \colon X \to \N(\Delta^\op)$ define a monoidal structure on $X_{[1]}$.
	Then $p_\op \colon X_\op \to \N(\Delta^\op)$ defines a monoidal structure on $(X_{[1]})^\op$.
\end{lemma}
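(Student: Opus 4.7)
The plan is to translate the monoidal structure condition through the Grothendieck correspondence $\Gr_\infty$ and exploit that $\op_\infty$ is a self-equivalence of $\Cat_\infty$.

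First, I would set $\xi := \Gr_\infty^{-1}(p) \colon \N(\Delta^\op) \to \Cat_\infty$. By the remark following Corollary \ref{cor:NC-GrC}, the hypothesis that $p$ defines a monoidal structure on $X_{[1]}$ is equivalent to the condition that for every $n \geq 0$ the Segal maps induce an equivalence
\[
    \xi([n]) \xto{\;\;\simeq\;\;} \xi(\{0,1\}) \times \cdots \times \xi(\{n-1,n\}) \cong \xi([1])^n
\]
in $\Cat_\infty$. By Definition \ref{def:cocartfib-op}, the fiberwise opposite $p_\op$ corresponds under $\Gr_\infty$ to the composite $\op_\infty \circ \xi$, and in particular its fiber $(X_\op)_{[n]}$ over $[n]$ is $\xi([n])^\op$. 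So it suffices to check the analogous Segal condition for $\op_\infty \circ \xi$.

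Next, I would invoke the fact that $\op_\infty$ is a (self-)equivalence of $\Cat_\infty$, so in particular it preserves equivalences and finite products; the product-preservation amounts to a natural equivalence $(A \times B)^\op \simeq A^\op \times B^\op$ in $\Cat_\infty$, which follows from Lemma \ref{lem:opcommute} applied to products of fibrant simplicial categories together with the fact that $\op_\Delta$ and $\N$ each commute with binary products. Applying $\op_\infty$ to the displayed Segal equivalence then yields
\[
    \xi([n])^\op \xto{\;\;\simeq\;\;} \bigl(\xi([1])^n\bigr)^\op \simeq \bigl(\xi([1])^\op\bigr)^n,
\]
which is precisely the Segal condition for $p_\op$. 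Since $(X_\op)_{[1]} = \xi([1])^\op \simeq (X_{[1]})^\op$, it follows that $p_\op$ defines a monoidal structure on $(X_{[1]})^\op$, as desired.

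The only mild obstacle is the verification that $\op_\infty$ is compatible with finite products in $\Cat_\infty$; while this follows formally from $\op_\infty$ being an equivalence of an $(\infty,1)$-category admitting finite products, the cleanest explicit argument proceeds at the model-categorical level via Lemma \ref{lem:opcommute} and then passes to $\Cat_\infty$ through the simplicial nerve.
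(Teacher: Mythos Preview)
Your argument is correct and is exactly the natural way to carry out the check: since Definition \ref{def:cocartfib-op} \emph{defines} $p_\op$ via the classifying functor $\xi$, verifying the Segal condition for $p_\op$ amounts to verifying it for $\op_\infty \circ \xi$, which follows from the Segal condition for $\xi$ once you know $\op_\infty$ preserves equivalences and finite products. The paper's own proof says only ``It is easy to check that the coCartesian fibration $p_\op$ is a monoidal quasicategory, and that $(X_\op)_{[1]} \simeq (X_{[1]})^\op$,'' so you have supplied precisely what was left to the reader, by the same route.

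One small simplification: your ``mild obstacle'' is not an obstacle at all. Since $\op_\infty$ is an autoequivalence of $\Cat_\infty$, it automatically preserves any limits that exist, in particular finite products; there is no need to descend to the model-categorical level via Lemma \ref{lem:opcommute}. Also note that the Remark following Corollary \ref{cor:NC-GrC} is phrased as a suggestion rather than a proven equivalence; what you are really using is the standard fact about straightening/unstraightening that fibers of a coCartesian fibration and the induced functors between them are computed by the classifying functor, which is implicit throughout \S\ref{sec:st-un}.
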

\begin{proof}
	It is easy to check that the coCartesian fibration $p_\op$ is a monoidal quasicategory, and that $(X_\op)_{[1]} \simeq (X_{[1]})^\op$.
\end{proof}

\begin{theorem}\label{thm:opcommute}
	Let $\C$ be a strict monoidal fibrant simplicial category and equip $\C^\op$ with its canonical monoidal structure.
	Then $\N^\otimes(\C^\op)$ and $\N^\otimes (\C)_\op$ define equivalent monoidal structures on $\N(\C^\op) \simeq \N(\C)^\op$. 
\end{theorem}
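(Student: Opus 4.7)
The plan is to assemble the proof almost entirely from lemmas already in place. By Lemma \ref{lem:C-otimes-bullet}, we can rewrite $\N^\otimes(\C^\op) = \N(\Gr\,(\C^\op)^\bullet)$ and $\N^\otimes(\C) = \N(\Gr\,\C^\bullet)$, so the statement reduces to comparing $\N\Gr((\C^\op)^\bullet)$ with $\N\Gr(\C^\bullet)_\op$ inside $\coCart_{/\N(\Delta^\op)}$, and then upgrading that comparison to one in $\MonCat_\infty$.

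First, I would invoke Lemma \ref{lem:C-bullet-op} to replace $(\C^\op)^\bullet$ by $(\C^\bullet)^\op$, so the left-hand side becomes $\N\Gr((\C^\bullet)^\op)$. Then I would apply Theorem \ref{thm:F-op-commute} to the functor $F = \C^\bullet \colon \Delta^\op \to \sCat$ — whose values $\C^n$ are fibrant simplicial categories since $\C$ is locally Kan — to conclude
\[
    \N\Gr((\C^\bullet)^\op) \;\simeq\; \N\Gr(\C^\bullet)_\op
\]
as vertices of $\coCart_{/\N(\Delta^\op)}$. Unwinding the definitions on both sides then yields $\N^\otimes(\C^\op) \simeq \N^\otimes(\C)_\op$ as coCartesian fibrations over $\N(\Delta^\op)$.

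Next, I would verify that this equivalence actually lives in the full subquasicategory $\MonCat_\infty \subset \coCart_{/\N(\Delta^\op)}$. The left-hand side defines a monoidal structure on $\N(\C^\op)$ by the monoidality result for operadic nerves quoted in the previous section; the right-hand side is a monoidal quasicategory by the lemma immediately preceding this theorem (which says that fiberwise opposite preserves monoidality). Since $\MonCat_\infty$ is a \emph{full} subquasicategory, an equivalence of the underlying objects in $\coCart_{/\N(\Delta^\op)}$ is automatically an equivalence in $\MonCat_\infty$.

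Finally, I would identify the fibers at $[1]$: on the one hand, $(\N^\otimes(\C^\op))_{[1]} \cong \N(\C^\op) \simeq \N(\C)^\op$ by Lemma \ref{lem:opcommute}; on the other hand, $(\N^\otimes(\C)_\op)_{[1]} \simeq (\N(\C))^\op$ by the description of the fiberwise opposite at the level of $[1]$-fibers (this is the content of the monoidality lemma just above). These agree, completing the identification of the two monoidal structures on $\N(\C)^\op$. The entire argument is essentially bookkeeping — the main non-trivial input is Theorem \ref{thm:F-op-commute}, which has already been proved — so there is no real obstacle; the only point requiring care is making sure that each equivalence is being read in the correct quasicategory (i.e.\ $\coCart_{/\N(\Delta^\op)}$ versus $\MonCat_\infty$), which is resolved by the fullness of the latter inclusion.
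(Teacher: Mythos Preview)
Your proposal is correct and follows essentially the same approach as the paper: the paper's proof is the one-liner ``Combine Lemma \ref{lem:C-bullet-op} with Theorem \ref{thm:F-op-commute}, taking $F = \C^\bullet$,'' and you have simply unpacked this (using Lemma \ref{lem:C-otimes-bullet}/Corollary \ref{cor:NC-GrC} to identify $\N^\otimes$ with $\N\Gr(\,(-)^\bullet\,)$, and the preceding lemma plus fullness of $\MonCat_\infty$ to land in the right quasicategory). There is no substantive difference.
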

\begin{proof}
	Combine Lemma \ref{lem:C-bullet-op} with Theorem \ref{thm:F-op-commute}, taking $F = \C^\bullet$.
\end{proof}

\addcontentsline{toc}{section}{References}
\bibliography{references}

\appendix
\section{Appendices}

\subsection{Models for $\infty$-categories, and their nerves} \label{sec:models}

In this paper, we pass between simplicially enriched categories, $\sCat$, and simplicial sets, $\sSet$. We also often invoke \textit{marked} simplicial sets $\sSet^+$. In this section, we describe how these categories, equipped with suitable model structures, serve as models for a category of $\infty$-categories, and how they are related.

\begin{definition} We recall the definitions of the three categories above with certain model category structures:
	\begin{enumerate}
		\item Let $\sCat$ denote the category of simplicially enriched categories in the sense of \cite{kellyenriched}, with the \textit{Bergner} model structure described in \cite{bergmodelcat}. In particular, the fibrant objects are the categories enriched in Kan complexes and the weak equivalences are the so-called Dwyer-Kan (or DK) equivalences of simplicial categories.
		
		\item Let $\sSet$ denote the category of simplicial sets with the \textit{Joyal} model structure as described in \cite{joyalapps} and \cite{htt}. The fibrant objects are the quasicategories, and the weak equivalences are the categorical equivalences of simplicial sets.
		
		\item Let $\sSet^+$ denote the category of \emph{marked simplicial sets}. Its objects are pairs $(S,W)$ where $S$ is a simplicial set and $W$ is a subset of $S[1]$, the collection of 1-simplices of $S$. The model structure on $\sSet^+$ is given by \cite{htt}*{3.1.3.7}. By \cite{htt}*{3.1.4.1}, the fibrant objects are the pairs $(S,W)$ for which $S$ is a quasicategory and $W$ is the set of 1-simplices of $S$ that become isomorphisms after passing to the homotopy category (i.e.~the equivalences of $S$). The weak equivalences, by \cite{htt}*{3.1.3.5}, are precisely the morphisms whose underlying maps of simplicial sets are categorical equivalences.
	
		\item Let $\RelCat$ denote the category of \emph{relative categories}, whose objects are pairs $(\cat{C},\cat{W})$, where $\cat{C}$ is a category and $\cat{W}$ is a subcategory of $\cat{C}$ that contains all the objects of $\cat{C}$. In \cite{barwickkanrelcats}, it is shown that $\RelCat$ admits a model structure, but we will not need it here. We only point out that any model category $\cat{C}$ has an underlying relative category in which $\cat{W}$ is the subcategory containing every object of $\cat{C}$ with only the weak equivalences as morphisms. 
	\end{enumerate}
\end{definition}

\begin{definition}
	Given a model category $\cat{C}$, we will denote by $\cat{C}^\circ$ the full subcategory spanned by bifibrant (i.e\ fibrant and cofibrant) objects.
\end{definition}

\begin{definition}
	We also introduce several functors which are useful in comparing the above categories as models of $\infty$-categories:
	\begin{enumerate}
		\item Let $\N\colon\sCat\to \sSet$ be the \textit{simplicial nerve} functor (first defined by Cordier)  of \cite{htt}*{1.1.5.5}. Crucially, if $\C$ is a fibrant simplicial category, then $\N\C$ is a quasicategory. This nerve has a left adjoint $\CC$. 
		\[
			\begin{tikzcd}[column  sep = large]
				\sSet \ar[r, bend left = 20, "\CC", ""{name = L}]
				&
				\sCat \ar[l, bend left = 20, "\N", ""{name = R}] 	
				\ar[from = L, to = R, symbol = \dashv]		
			\end{tikzcd}
		\]		
		\item Let $L^H\colon\RelCat\to \sCat$ denote the \textit{hammock localization} functor, defined in \cite{dwyerkancalculating}. 
		\item  Let $(-)^\natural\colon\sSet^\circ\to \sSet^+$ denote the functor, defined in \cite{htt}*{3.1.1.9\footnote{This refers to the published version listed in our references. The same definition appears at 3.1.1.8 in the April 2017 version on Lurie's website.}}, that takes a quasicategory $C$ to the pair $(C,W)$ where $W$ is the collection of weak equivalences\footnote{We are using the fact that the unique map $p \colon C \to \Delta^0$ is a Cartesian fibration iff $C$ is a quasicategory, and the $p$-Cartesian edges are precisely the weak equivalences.} in $C$.	
		\item Let $(-)^{\sharp}	\colon\sSet\to \sSet^+$ denote the functor, defined in \cite{htt}*{3.1.0.2} that takes a simplicial set $S$ to the pair $(S,S[1])$, in which every edge of $S$ has been marked.
		\item Let $\uq\colon \RelCat\to \sSet$ denote the \emph{underlying quasicategory} functor of \cite{mazelgeeadjunctions}, given by the composition
		\[
			\RelCat \xrightarrow{\;L^H\;} \sCat \xrightarrow{\;\RR\;} \sCat \xrightarrow{\;\N\;} \sSet
		\]
		 where $\RR\colon \sCat\to \sCat$ is the fibrant replacement functor  of simplicial categories defined in \cite{mazelgeeadjunctions}*{\S 1.2}. Note that, because of the fibrant replacement, $\uq(\cat{C},\cat{W})$ is indeed a quasicategory for any relative category $(\cat{C},\cat{W})$.  
	\end{enumerate}
\end{definition}

\noindent We can now give a definition of \textit{the} quasicategory of $\infty$-categories:

\begin{definition} \label{def:cat-infty}
	Since the fibrant-cofibrant objects in $\sSet^+$ correspond to quasicategories, we let the \fm{quasicategory of quasicategories}, or of $\infty$-categories, be:
	\[
	\Cat_\infty := \N (\sSet^+)^\circ,
	\]
	where we write $\N (\sSet^+)^\circ$ instead of the more cumbersome $\N\big( (\sSet^+)^\circ \big)$.
\end{definition}

\begin{remark} \label{rem:nerve-notation}
	Going forward, we will often write  $\N (-)^\circ$ instead of $\N\big((-)^\circ \big)$ to indicate the simplicial nerve applied to the bifibrant subcategory of a simplicial model category.
\end{remark}

\begin{theorem}
	The underlying quasicategories of the model categories $\sCat$, $\sSet$ and $\sSet^+$are all equivalent to $\Cat_\infty$. 
\end{theorem}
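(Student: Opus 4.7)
The plan is to reduce the statement to the case of $\sSet^+$, where it holds essentially by definition, via Quillen equivalences relating the three model categories. The first ingredient is the classical identification, for any simplicial model category $\cat{M}$, of the underlying quasicategory $\uq(\cat{M})$ with the simplicial nerve $\N(\cat{M}^\circ)$ of its bifibrant subcategory. This follows from Dwyer--Kan's theorem that the hammock localization $L^H(\cat{M},\cat{W})$ is Dwyer--Kan equivalent to $\cat{M}^\circ$ (viewed as a simplicial category via the Kan enrichment of a simplicial model category), so that after applying $\RR$ and $\N$ one obtains an equivalence of quasicategories. Applied to $\sSet^+$ itself, this recovers $\uq(\sSet^+) \simeq \N(\sSet^+)^\circ = \Cat_\infty$ by Definition \ref{def:cat-infty}.

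The second ingredient is two standard Quillen equivalences: the Joyal--Lurie equivalence $\CC \dashv \N \colon (\sSet, \mathrm{Joyal}) \rightleftarrows (\sCat, \mathrm{Bergner})$ (\cite{htt}*{2.2.5.1}), and Lurie's Quillen equivalence between $(\sSet, \mathrm{Joyal})$ and $\sSet^+$ (\cite{htt}*{3.1.5.1}). A Quillen equivalence of simplicial model categories restricts to a Dwyer--Kan equivalence on bifibrant subcategories (its derived unit and counit are weak equivalences there), hence induces an equivalence of simplicial nerves $\N(\cat{M}^\circ) \simeq \N(\cat{N}^\circ)$, and so by the previous paragraph an equivalence of underlying quasicategories. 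Chaining these yields
\[
\uq(\sCat) \simeq \uq(\sSet) \simeq \uq(\sSet^+) \simeq \Cat_\infty,
\]
which is the claim.

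The main technical point is verifying the naturality of the identification $\uq(\cat{M}) \simeq \N(\cat{M}^\circ)$ with respect to Quillen functors; one must be careful about fibrant replacements in $\sCat$, which is precisely why the functor $\RR$ appears in the definition of $\uq$. For $(\sSet, \mathrm{Joyal})$ and $\sSet^+$ every object is already cofibrant, and the discrepancy between them is mostly bookkeeping with the marking of equivalences via $(-)^\natural$. Since this theorem functions here as background orientation for the appendix, I would prove it by citation, pointing the reader to \cite{htt}*{4.2.4.4} and \cite{mazelgeeadjunctions} for the underlying-quasicategory comparison, and to the Quillen equivalences above for the chain of equivalences.
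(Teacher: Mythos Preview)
Your approach is the same as the paper's: both chain the Quillen equivalences $\sCat \leftrightarrows \sSet \leftrightarrows \sSet^+$ to transport equivalences of underlying quasicategories, and both anchor the chain by identifying $\uq(\sSet^+)$ with $\N(\sSet^+)^\circ = \Cat_\infty$ via the Dwyer--Kan comparison of the hammock localization with the bifibrant subcategory. The paper cites Hinich (\emph{Dwyer--Kan localization revisited}) for both ingredients; you cite Mazel-Gee and Lurie, which works equally well.

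One small imprecision in your write-up: you phrase the argument in terms of \emph{simplicial} model categories and the formula $\uq(\cat{M}) \simeq \N(\cat{M}^\circ)$, but neither $(\sSet, \text{Joyal})$ nor $(\sCat, \text{Bergner})$ is a simplicial model category in the Kan--Quillen-enriched sense, so $\N(\cat{M}^\circ)$ is not a priori meaningful for those two. The paper sidesteps this by invoking the comparison $\N(\cat{M}^\circ) \simeq \uq(\cat{M})$ only for $\cat{M} = \sSet^+$ (which \emph{is} simplicial) and then transporting along $\uq$ using Hinich's general result that any Quillen equivalence induces an equivalence of underlying quasicategories. The references you propose to cite do contain this general statement, so the gap is in the exposition, not the strategy.
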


\begin{proof}
	First note that \cite{hinichdwyerkan}*{Proposition 1.5.1} implies that a Quillen equivalence of model categories induces an equivalence of underlying quasicategories. There are Quillen equivalences $\sCat\leftrightarrows\sSet$ \cite{bergner}*{Theorem 7.8} and $\sSet\leftrightarrows\sSet^+$ \cite{htt}*{3.1.5.1 (A0)}. As a result, there are equivalences of quasicategories $\uq(\sSet,\var{WE})\to \uq(\sSet^+,\cat{WE})$, where $\cat{WE}$ denotes the collection of weak equivalences between marked simplicial sets, and $\uq(\sCat,\cat{DK})\to \uq(\sSet,\cat{WE})$, where $\cat{DK}$ denotes the collection of Dwyer-Kan equivalences. It then follows, by \cite{htt}*{3.1.3.5}, that there are equivalences of marked simplicial sets $\uq(\sSet,\cat{WE})^\natural\leftarrow \uq(\sSet^+,\cat{WE})^\natural$ and $\uq(\sCat,\cat{DK})^\natural\to \uq(\sSet,\cat{WE})^\natural$. 
	
	Now by \cite{hinichdwyerkan}*{Proposition 1.4.3} and its corollary, we have a (Dwyer-Kan) equivalence of simplicial categories $(\sSet^+)^\circ\to L^H(\sSet^+,\cat{WE})$. By definition of fibrant replacement, we also have equivalences $(\sSet^+)^\circ\to \RR(\sSet^+)^\circ$. Since the latter morphism is between fibrant objects, and the right Quillen adjoint $\N$ preserves equivalences between fibrant objects (by Ken Brown's Lemma), we have an equivalence of simplicial sets $\N(\sSet^+)^\circ\to \uq (\sSet^+,\cat{WE})$. Thus another application of \cite{htt}*{3.1.3.5} gives an equivalence of marked simplicial sets $(\N(\sSet^+)^\circ)^\natural\to \uq (\sSet^+,\cat{WE})^\natural$.
	
	So we have equivalences of marked simplicial sets:
	\[
	(\N(\sSet^+)^\circ)^\natural\to \uq (\sSet^+,\cat{WE})^\natural\to \uq(\sSet,\cat{WE})^\natural \to \uq(\sCat,\cat{DK})^\natural 
	\]
	
	\noindent These imply the result after applying the (large) nerve to the (large) quasicategory of marked simplicial sets. 	 
\end{proof}

\subsection{Straightening, unstraightening and $\Gr_\infty$}
	\label{sec:st-un}
	This section is a summary of results from \cite{htt}*{3.2 and 3.3} regarding straightening and unstraightening.

	\begin{theorem}[\cite{htt}*{3.2.0.1}]
		Let $S$ be a simplicial set, $\D$ a simplicial category, and $\phi \colon \CC[S] \xto{\; \simeq \;} \D$ an equivalence of simplicial categories.
		Then there is a Quillen equivalence
		\[
			\begin{tikzcd}[column  sep = large]
				{}_{\phantom{S}} (\sSet^+)^\D \ar[r, bend left = 20, "\Un_\phi^+", ""{name = L}, start anchor = north east, end anchor = north west]
				&
				 (\sSet^+)_{/S} \ar[l, bend left = 20, "\St_\phi^+", ""{name = R}, start anchor = south west, end anchor = south east] 	
				\ar[from = L, to = R, symbol = \vdash]		
			\end{tikzcd}
		\]
		where $(\sSet^+)_{/S}$ is the category of marked simplicial sets over $S$ with the coCartesian model structure, and $(\sSet^+)^\D$ is the category of $\D$ shaped diagrams in marked simplicial sets with the projective model structure.
	\end{theorem}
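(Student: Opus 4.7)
The plan is to follow the strategy of Lurie's original proof in \cite{htt}*{\S 3.2}. First, I would define the straightening functor $\St_\phi^+$ explicitly: for a marked simplicial set $p \colon X \to S$, the value $\St_\phi^+(X)(d)$ at $d \in \D$ is built from the simplices of $X$ over $S$ via a coend involving the cosimplicial simplicial category $n \mapsto \CC[\Delta^n \star \{\infty\}]$ and the equivalence $\phi \colon \CC[S] \to \D$, with markings inherited from those of $X$. The right adjoint $\Un_\phi^+$ then exists by the adjoint functor theorem, and may equivalently be described by a corresponding end formula, making the adjunction tautological.

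Second, I would verify that this is a Quillen adjunction. Both the coCartesian model structure on $(\sSet^+)_{/S}$ and the projective model structure on $(\sSet^+)^\D$ are cofibrantly generated, so it suffices to show that $\St_\phi^+$ sends the generating cofibrations and generating trivial cofibrations of $(\sSet^+)_{/S}$ (as enumerated in \cite{htt}*{3.1.3}) to cofibrations and trivial cofibrations in $(\sSet^+)^\D$. Preservation of cofibrations reduces to a direct computation on the generating set. Preservation of trivial cofibrations is more subtle, since these include both the horn inclusions and the marking-extension maps that force coCartesian edges to become weak equivalences in the fibers.

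Third, and most delicate, is upgrading the Quillen adjunction to a Quillen equivalence. The standard strategy is a two-step reduction. First reduce to the case where $\phi$ is an isomorphism by showing, via Ken Brown's lemma applied to the Quillen adjunction together with invariance under weak equivalences in $\phi$, that the derived unit and counit for general $\phi$ are weakly equivalent to those for $\phi = \mathrm{id}_{\CC[S]}$. Then reduce to the base case $S = \Delta^n$ by writing an arbitrary simplicial set $S$ as an iterated cell attachment of simplices and exploiting the fact that both sides of the Quillen equivalence are compatible with homotopy pushouts and transfinite compositions of cofibrations.

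The principal obstacle is the base case $S = \Delta^n$. Even here, verifying that the derived unit $X \to \Un_\phi^+ \St_\phi^+ X$ is a weak equivalence for a cofibrant $X \in (\sSet^+)_{/\Delta^n}$ requires an intricate cellular induction on $X$, combined with a combinatorial analysis of how $\CC[\Delta^n]$ decomposes along its subsimplices (this is where the necklace combinatorics of the rigidification enter). One must further check that the derived counit is an equivalence on fibrant $F \colon \CC[\Delta^n] \to \sSet^+$, using that coCartesian fibrations over $\Delta^n$ are built by iterated mapping cylinder constructions governed precisely by the data of such an $F$. This is the technical heart of \cite{htt}*{\S 3.2}, and any genuinely new proof would need to confront the same combinatorial content.
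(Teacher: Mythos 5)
This statement is not proved in the paper at all: it is quoted verbatim from \cite{htt}*{3.2.0.1} as an external input (the appendix merely records it, together with \cite{htt}*{3.2.4.1} and its corollary, in order to define $\Gr_\infty$). So there is no in-paper argument to compare yours against; the only meaningful comparison is with Lurie's proof, which is what you are summarizing.

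As a roadmap of that proof your outline is essentially faithful: the coend/left-Kan-extension definition of $\St_\phi^+$ via the cone construction $\CC[\Delta^n \star \{\infty\}]$, the existence of $\Un_\phi^+$ as a right adjoint, the reduction from general $\phi$ to $\phi = \mathrm{id}_{\CC[S]}$, the skeletal induction reducing general $S$ to simplices, and the mapping-cylinder analysis of (co)Cartesian fibrations over a simplex are all the correct structural moves of \cite{htt}*{\S 3.2.1--3.2.3}. But it is a roadmap, not a proof: every genuinely hard step is named and then deferred. Concretely, (i) you assert but do not verify that $\St_\phi^+$ carries the generating \emph{marked anodyne} maps to projective trivial cofibrations --- this is the content of \cite{htt}*{3.2.1.11 ff.} and is where most of the work in establishing the Quillen adjunction lives; (ii) the true base case of the equivalence is straightening over a \emph{point} (\cite{htt}*{\S 3.2.2}), where one must compare $\St_{\Delta^0} X$ with $X$ via an explicit analysis of the counit, before the simplex case can even be set up; and (iii) the claim that both sides are ``compatible with homotopy pushouts'' needs the left properness and cofibrancy statements that make skeletal induction compute a homotopy colimit. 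None of these is a step that would \emph{fail}, but as written the proposal establishes nothing beyond what the citation already does; if the intent is to treat the theorem as a black box, the honest move is to cite it, exactly as the paper does.
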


	\begin{lemma}[\cite{htt}*{3.2.4.1}]
		Both $(\sSet^+)_{/S}$ and $(\sSet^+)^\D$ are simplicial model categories, and $\Un_\phi^+$ is a simplicial functor\footnote{But $\St_\phi^+$ is not always a simplicial functor.} which induces an equivalence of simplicial categories
		\[
		(\Un_\phi^+)^\circ\colon \big((\sSet^+)^\D \big)^\circ\xrightarrow{\;\simeq \;} \big((\sSet^+)_{/S}\big)^\circ.
		\]
	\end{lemma}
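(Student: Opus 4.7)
The plan is to handle the three assertions of the lemma in sequence. First, I would establish the simplicial enrichments: on $(\sSet^+)_{/S}$, the simplicial mapping space between $X\to S$ and $Y\to S$ is obtained by extracting the underlying simplicial set of the marked mapping space $\mathrm{Map}^\sharp_S(X,Y)$ (as in \cite{htt}*{3.1.3.1}), which yields tensors $K\otimes (X\to S) = (X\times K^\sharp \to S)$ and cotensors given by the internal hom into $K^\sharp$. For $(\sSet^+)^\D$, I would use the pointwise cotensoring $(K \pitchfork F)(d) = F(d)^{K^\sharp}$ and the analogous tensor. With these structures in hand, the simplicial model category axiom (SM7) reduces to verifying the pushout-product property for the generating cofibrations and trivial cofibrations, which in both cases follows from the fact that the tensor with a cofibration of marked simplicial sets is a left Quillen bifunctor -- a direct consequence of the definition of the coCartesian model structure on $\sSet^+$ as a Cartesian model category (or projective model structure thereof).

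Second, I would verify that $\Un_\phi^+$ is a simplicial functor. Because the straightening functor $\St_\phi^+$ is defined via a colimit of the form $\St_\phi^+ X = \mathrm{colim}\,(\sigma \mapsto \text{something involving }\CC[\Delta^n]\text{ and }\phi)$ and naturally commutes with tensors by marked simplicial sets (cofibers of products with $K^\sharp$ on the overcategory side correspond to cofibers of pointwise products on the functor category side), its right adjoint $\Un_\phi^+$ automatically preserves cotensors. The cleanest way to exhibit this is to unwind the formula for $\Un_\phi^+$ on $n$-simplices in terms of maps $\CC[\Delta^n] \star \CC[\Delta^0] \to \D$ lifting along $\phi$, and check that the natural isomorphism $\Un_\phi^+(F^{K^\sharp}) \cong (\Un_\phi^+ F)^{K^\sharp}$ is immediate from the definition.

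Third, I would deduce the equivalence of simplicial categories on bifibrant objects from the Quillen equivalence. The general principle is this: if $\cat{M} \rightleftarrows \cat{N}$ is a Quillen equivalence between simplicial model categories and the right adjoint $U$ is a simplicial functor, then $U^\circ\colon \cat{N}^\circ \to \cat{M}^\circ$ is a Dwyer-Kan equivalence of simplicial categories. Essential surjectivity follows from the Quillen equivalence (every bifibrant object in $\cat{M}$ is weakly equivalent to $U$ of some bifibrant object). For fully faithfulness on mapping spaces, one observes that since $U$ is simplicial and right Quillen, for bifibrant $X, Y$ the map $\mathrm{Map}_{\cat N}(X,Y) \to \mathrm{Map}_{\cat M}(UX, UY)$ can be analyzed by tensoring with $\Delta^n$ (which remain bifibrant since the model structures are simplicial) and using that the derived unit and counit are weak equivalences between bifibrant objects.

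The main obstacle will be the verification that $\Un_\phi^+$ actually preserves the simplicial enrichment on the nose (rather than up to natural isomorphism that must be tracked carefully), because the formula for $\Un_\phi^+$ is given by a somewhat opaque end/limit that encodes coCartesian lifting data, and the compatibility with cotensoring by $K^\sharp$ requires matching up the marked edges on both sides. Once this is pinned down, everything else proceeds by general model-categorical machinery; the combinatorial heart of the lemma lies in the explicit description of $\Un_\phi^+$ and a patient unpacking of how $K^\sharp$-parametrized families of lifts correspond between the two sides.
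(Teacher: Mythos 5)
First, a point of reference: the paper does not prove this lemma at all --- it is imported verbatim from \cite{htt}*{3.2.4.1} --- so your proposal has to be measured against Lurie's argument. Your overall architecture matches his: (i) exhibit the simplicial model structures via $\mathrm{Map}^\sharp_S$ and the pointwise enrichment, (ii) show $\Un^+_\phi$ is a simplicial functor, (iii) invoke the general principle that a right Quillen equivalence which is simplicial induces a Dwyer--Kan equivalence on bifibrant subcategories (\cite{htt}*{A.3.1.10}, together with the observation that every object of $(\sSet^+)_{/S}$ is cofibrant). Steps (i) and (iii) are acceptable as sketches, though (i) quietly absorbs the genuine combinatorial content of \cite{htt}*{3.1.4.2--3.1.4.4}: the pushout-product axiom for $(-)\times K^\sharp$ against marked anodyne maps is not ``a direct consequence of the definition.''

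The real gap is in step (ii). You assert that $\St^+_\phi$ ``naturally commutes with tensors by marked simplicial sets,'' and deduce that $\Un^+_\phi$ preserves cotensors. If $\St^+_\phi$ carried coherent natural isomorphisms $\St^+_\phi(M\times K^\sharp)\cong \St^+_\phi(M)\times K^\sharp$, it would itself be a simplicial functor (a left adjoint between tensored simplicial categories that preserves tensors is automatically enriched), contradicting the footnote in the very statement you are proving. What is actually available is weaker and asymmetric: because $\CC[-]$ does not preserve products, there is only a natural, generally non-invertible comparison map $\St^+_\phi(M\times K^\sharp)\to \St^+_\phi(M)\times K^\sharp$ (which Lurie shows is a weak equivalence on cofibrant objects, but never an isomorphism in general). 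Transposing this lax structure across the adjunction yields maps $\Un^+_\phi(X)\otimes K\to \Un^+_\phi(X\otimes K)$, and maps in \emph{this} direction are exactly what one needs to define $\mathrm{Map}(X,Y)\to\mathrm{Map}(\Un^+_\phi X,\Un^+_\phi Y)$ on $n$-simplices by $f\mapsto \Un^+_\phi(f)\circ\gamma$; the same direction is useless for enriching $\St^+_\phi$, which is precisely why only the unstraightening is simplicial. So your conclusion is correct but your mechanism proves too much, and it also undercuts step (iii): since the adjunction is then not a simplicial adjunction, full faithfulness on mapping spaces cannot be obtained by passing through an enriched adjunction isomorphism, and one must instead use that the comparison map above is a weak equivalence.
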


	\begin{corollary}[\cite{htt}*{A.3.1.12}] 
	\label{cor:st-un-quasi}
		Taking the nerve of this equivalence, there is an equivalence of quasicategories\footnote{We use the notational convention in Remark \ref{rem:nerve-notation}.}
		\[
		\N(\Un_\phi^+)^\circ \colon \N\big((\sSet^+)^\D \big)^\circ \xrightarrow{\; \simeq \;} \N\big((\sSet^+)_{/S}\big)^\circ.
		\]
	\end{corollary}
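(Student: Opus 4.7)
The plan is to reduce this to the general principle that applying the simplicial nerve $\N$ to a Dwyer-Kan equivalence between locally Kan simplicial categories produces a (Joyal) weak equivalence of quasicategories, and then verify the two hypotheses in our setting. The input is the equivalence of simplicial categories $(\Un_\phi^+)^\circ$ supplied by the preceding Lemma \cite{htt}*{3.2.4.1}, so the work all lies in checking that $\N$ sends this particular map to an equivalence in $\sSet^\circ$.

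First, I would recall that $\N \colon \sCat \to \sSet$ is the right adjoint in the Bergner-Joyal Quillen equivalence (\cite{bergner}*{Theorem 7.8}), so in particular it is a right Quillen functor. By Ken Brown's Lemma, any right Quillen functor preserves weak equivalences between fibrant objects. The fibrant objects in the Bergner model structure on $\sCat$ are precisely the locally Kan simplicial categories, so it suffices to check that the domain and codomain of $(\Un_\phi^+)^\circ$ are locally Kan, and that $(\Un_\phi^+)^\circ$ is itself a Dwyer-Kan equivalence.

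Next, I would note that both $(\sSet^+)^\D$ and $(\sSet^+)_{/S}$ are simplicial model categories (this is part of the statement of the preceding Lemma). For any simplicial model category $\cat{M}$, the SM7 axiom implies that the mapping simplicial set $\Map_{\cat{M}}(X,Y)$ is a Kan complex whenever $X$ is cofibrant and $Y$ is fibrant; consequently, the full simplicial subcategory $\cat{M}^\circ$ on the bifibrant objects is locally Kan, hence fibrant in the Bergner model structure. This disposes of the first hypothesis for both $\big((\sSet^+)^\D\big)^\circ$ and $\big((\sSet^+)_{/S}\big)^\circ$.

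The remaining point, that $(\Un_\phi^+)^\circ$ is a Dwyer-Kan equivalence, is exactly the content of the cited Lemma \cite{htt}*{3.2.4.1}: the functor $\Un_\phi^+$ is a simplicial right Quillen equivalence, and its restriction to bifibrant objects is an equivalence of simplicial categories. Applying $\N$ therefore yields a weak equivalence in the Joyal model structure on $\sSet$. Since both sides are quasicategories (being nerves of locally Kan simplicial categories, per \cite{htt}*{1.1.5.10}), this is an equivalence of quasicategories, as desired. The only conceptual subtlety worth flagging, which I would isolate as the one nontrivial point, is the appeal to SM7 to guarantee that the bifibrant subcategories of these two simplicial model categories are locally Kan; the rest of the argument is formal bookkeeping with Quillen functors and Ken Brown's Lemma.
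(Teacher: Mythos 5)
Your argument is correct and amounts to unpacking the citation the paper relies on: the paper simply invokes \cite{htt}*{A.3.1.12}, whereas you reprove the relevant special case directly from the facts that $\N$ is right Quillen for the Bergner--Joyal Quillen equivalence (so Ken Brown's Lemma applies to weak equivalences between fibrant objects) and that SM7 makes the bifibrant subcategory of a simplicial model category locally Kan, hence Bergner-fibrant. Both routes rest on the same input --- the preceding Lemma exhibiting $(\Un_\phi^+)^\circ$ as an equivalence of fibrant simplicial categories --- and your version is self-contained; the only thing the paper records that you do not need to is the cofibrancy of all objects of $(\sSet^+)_{/S}$, which is required for the cited form of \cite{htt}*{A.3.1.12} but is subsumed in your appeal to the preceding Lemma.
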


\begin{remark}
	Note that, for \cite{htt}*{A.3.1.12} to apply above, it is essential that all of the objects of $(\sSet^+)_{/S}$ are cofibrant. This follows from \cite{htt}*{3.1.3.7} when we set $S=\Delta^0$ and the recollection that every object of $\sSet$ is cofibrant in Joyal model structure.
\end{remark}

	By \cite{htt}*{3.1.1.11\footnote{This is 3.1.1.10 in the April 2017 version on Lurie's website.}}, the vertices of $\N \big((\sSet^+)_{/S}\big)^\circ$ are precisely maps of marked simplicial sets of the form $X^\natural \to S^\sharp$ where $X \to S$ is a coCartesian fibration.
	We may thus \emph{identify} $X \to S$ with $X^\natural \to S^\sharp$ and treat the vertices of $\N \big((\sSet^+)_{/S}\big)^\circ$ as coCartesian fibrations over $S$.
	This motivates and justifies the following notation:

	\begin{definition}\label{def:cocartqcat}
		The \fm{quasicategory of coCartesian fibrations over $S$} is
		\[
			\coCart_{/S} := \N \big((\sSet^+)_{/S}\big)^\circ.
		\] 
	\end{definition}

	\begin{corollary} \label{cor:gr-corr-infty}
		There is an equivalence of quasicategories
		\[
			(\Cat_\infty)^S \simeq \coCart_{/S}.
		\]		
	\end{corollary}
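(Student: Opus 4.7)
The plan is to chain together Corollary \ref{cor:st-un-quasi} with a standard comparison between functor quasicategories and nerves of diagram model categories. Specifically, take $\D := \CC[S]$ with $\phi$ the identity (or any equivalence), so that Corollary \ref{cor:st-un-quasi} already supplies an equivalence of quasicategories
\[
\N\big((\sSet^+)^{\CC[S]}\big)^\circ \xrightarrow{\;\simeq\;} \N\big((\sSet^+)_{/S}\big)^\circ = \coCart_{/S}.
\]
It then remains to produce an equivalence $(\Cat_\infty)^S \simeq \N\big((\sSet^+)^{\CC[S]}\big)^\circ$, and compose.

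For the second equivalence, I would invoke the general principle (HTT 4.2.4.4 and its surrounding discussion) that for a combinatorial simplicial model category $\cat{M}$ and any simplicial set $K$, evaluation at the adjunction unit induces an equivalence of quasicategories
\[
\N\big(\cat{M}^{\CC[K]}\big)^\circ \xrightarrow{\;\simeq\;} \Fun\big(K,\, \N(\cat{M})^\circ\big).
\]
Applied to $\cat{M} = \sSet^+$ with the coCartesian model structure and $K = S$, the right-hand side is by Definition \ref{def:cat-infty} precisely $\Fun(S, \Cat_\infty) = (\Cat_\infty)^S$, while the left-hand side matches the domain of the straightening equivalence above. Concatenating the two equivalences yields $(\Cat_\infty)^S \simeq \coCart_{/S}$ as required.

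The main obstacle — really the only nontrivial step — is justifying the second equivalence, since everything else is just bookkeeping with Corollary \ref{cor:st-un-quasi}. One has to be careful that $(\sSet^+)^{\CC[S]}$ carries the projective model structure (which it does, as noted before Corollary \ref{cor:st-un-quasi}), that its bifibrant objects are the right source for the comparison, and that the hypotheses of HTT 4.2.4.4 (combinatoriality and simpliciality of $\sSet^+$, plus every object of $\sSet^+_{/S}$ being cofibrant, as recalled in the remark after Corollary \ref{cor:st-un-quasi}) are met. Once this is in place, the composition
\[
(\Cat_\infty)^S \;\simeq\; \N\big((\sSet^+)^{\CC[S]}\big)^\circ \;\xrightarrow{\;\N(\Un_\phi^+)^\circ\;}\; \N\big((\sSet^+)_{/S}\big)^\circ \;=\; \coCart_{/S}
\]
is an equivalence of quasicategories, proving the corollary. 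I would remark briefly that this composite is, by construction, the $\infty$-categorical Grothendieck construction $\Gr_\infty$ featured throughout the paper.
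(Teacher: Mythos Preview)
Your proposal is correct and follows essentially the same route as the paper: apply Corollary~\ref{cor:st-un-quasi} with $\D=\CC[S]$ and $\phi=\mathrm{id}$, then invoke \cite{htt}*{4.2.4.4} together with Definition~\ref{def:cat-infty} to identify $\N\big((\sSet^+)^{\CC[S]}\big)^\circ$ with $(\Cat_\infty)^S$. The only minor slip is that the cofibrancy remark you cite pertains to the applicability of \cite{htt}*{A.3.1.12} (i.e.\ Corollary~\ref{cor:st-un-quasi}), not to the hypotheses of \cite{htt}*{4.2.4.4}, but this does not affect the argument.
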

	\begin{proof}
	By Corollary \ref{cor:st-un-quasi} with $\D = \CC[S]$ and $\phi$ the identity, it suffices to show that we have an equivalence of quasicategories
	\[
		\N\big((\sSet^+)^{\CC[S]} \big)^\circ \simeq (\Cat_\infty)^S.
	\]
	But this is precisely \cite{htt}*{4.2.4.4}, which states that
	\[
		\N\big((\sSet^+)^{\CC[S]} \big)^\circ \simeq \big(\N(\sSet^+)^\circ \big)^S,
	\]
	together with Definition \ref{def:cat-infty}.
	\end{proof}
	
%

	\begin{definition}\label{def:grinfinity}
		Let $\Gr_\infty$ denote the above equivalence of quasicategories,
		\[
			\begin{tikzcd}
				 (\Cat_\infty)^S \ar[rr, bend left = 15, "\Gr_\infty"] & \simeq & \coCart_{/S} \ar[ll, bend left = 15, "\Gr_\infty^{-1}"]
			\end{tikzcd}
		\]
		and let $\Gr_\infty^{-1}$ denote its weak inverse (i.e.\ there are natural equivalences of functors $\sf{Id}_{\coCart_{/S}}\simeq\Gr_\infty\circ \Gr_\infty^{-1}$ and $\sf{Id}_{(\Cat_\infty)^S}\simeq \Gr_\infty^{-1}\circ \Gr_\infty$). 
	\end{definition}

	\begin{remark}
		The existence of a weak inverse $\Gr_\infty^{-1}$ is a result of the ``fundamental theorem of quasicategory theory'' \cite{rezkstuff}*{\S 30}.
		By \cite{htt}*{5.2.2.8}, one can check that $\Gr_\infty$ and $\Gr_\infty^{-1}$ are adjoints in the sense of \cite{htt}*{5.2.2.1}, but we will not need that here.

		Note that $\Gr_\infty^{-1}$ is \emph{not} the nerve of $(\St_\phi^+)^\circ$ (the latter is not even a simplicial functor).
		See \cite{riehl2017comprehension}*{6.1.13, 6.1.22} for a description of $\Gr_\infty^{-1}$ \emph{on objects}, and \cite{riehl2017comprehension}*{6.1.19} for an alternative description of $\Gr_\infty$.
	\end{remark}

	\begin{definition}[\cite{htt}*{3.3.2.2}]
	\label{def:classified}
		For $p \colon X \to S$ a coCartesian fibration, a map $f \colon S \to \Cat_\infty$ \fm{classifies $p$} if there is an equivalence of coCartesian fibrations $X \simeq \Gr_\infty f$.
	\end{definition}

\subsection{Functors out of $\CC[\Delta^n]$}\label{sec:func-cc}
	We review the characterization of simplicial functors out of $\CC[\Delta^n]$ that will be used in the proof of Theorem \ref{thm:gr-rel-nerve}.
	All material here is from \cite{riehl2017comprehension}, with some slight modifications in notation and terminology.

	Throughout, $[n]$ denotes the poset $\{0 < 1 < \dots < n\}$.

	\begin{definition}[\cite{riehl2017comprehension}*{4.4.6}]
	\label{def:bead}
		Let $I = \{i_0 < i_1 < \dots < i_m \}$ be a subset of $[n]$ containing at least $2$ elements (i.e.\ $m \geq 1$).

		An \fm{$r$-dimensional bead shape} of $I$, denoted $\langle I_0 | I_1 | \dots | I_r \rangle$, is a partition of $I$ into non-empty subsets $I_0,\dots, I_r$ such that $I_0 = \{i_0, i_m\}$.
	\end{definition}

	\begin{example} \label{eg:bead}
		A $2$-dimensional bead shape of $I = \{0,1,2,3,5,6\}$:
		\begin{align*}
			I_0 &= \{0,6\} ,
			& I_1 &= \{3\} ,
			& I_2 &= \{1,2,5\}.
		\end{align*}
		We write $S_{\langle I_0 | I_1 | I_2 \rangle}$ to mean the same thing as $S_{\langle 06|3|125\rangle}$.
	\end{example}

	\begin{lemma}[\cite{riehl2017comprehension}*{4.4.9}]
	\label{lem:simplicial-functor}
		A simplicial functor $S \colon \CC[\Delta^n] \to \K$ is precisely the data of:
		\begin{itemize}
			\item For each $i \in [n]$, an object $S_i \in \K$
			\item For each subset $I = \{i_0 < \dots < i_m\} \subseteq [n]$ where $m \geq 1$, and each $r$-dimensional bead shape $\bead{I_0 | \dots | I_r}$ of $I$, an $r$-simplex $S_{\langle I_0 | \dots | I_r \rangle}$ in $\K(S_{i_0}, S_{i_m})$ whose boundary is compatible with lower-dimensional data.
		\end{itemize}
	\end{lemma}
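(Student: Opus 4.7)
The plan is to unpack the well-known explicit presentation of $\CC[\Delta^n]$ as a simplicial category and match its cellular structure against the bead shape combinatorics. First I would recall that $\CC[\Delta^n]$ has objects $0, 1, \ldots, n$, and that $\CC[\Delta^n](i,j)$ is empty for $i > j$, a point for $i = j$, and otherwise the nerve $\N(P_{i,j})$ of the poset $P_{i,j}$ whose elements are subsets $T \subseteq \{i, i+1, \ldots, j\}$ containing both $i$ and $j$, ordered by reverse inclusion. Composition is given on vertices (and extends levelwise) by union of subsets: $T' \circ T = T \cup T'$ for $T \in P_{i,j}$, $T' \in P_{j,k}$. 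In particular, $\{i,j\}$ is the minimum of $P_{i,j}$ and represents the ``indecomposable'' arrow $i \to j$.

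Second, I would set up a bijection between bead shapes and certain chains. An $r$-simplex of $\N(P_{i_0, i_m})$ is a weak chain $T_0 \supseteq T_1 \supseteq \cdots \supseteq T_r$, non-degenerate iff all inclusions are strict. Given a subset $I \subseteq [n]$ with minimum $i_0$, maximum $i_m$, and $|I| \geq 2$, together with an $r$-dimensional bead shape $\bead{I_0|I_1|\cdots|I_r}$, I assign the strictly decreasing chain $T_k := I_0 \cup I_1 \cup \cdots \cup I_{r-k}$, so $T_0 = I$ and $T_r = I_0 = \{i_0, i_m\}$. Conversely, any strict chain whose top element is $I$ and whose bottom element is $\{i_0, i_m\}$ arises from a unique bead shape by reading off the successive set differences. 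This gives a bijection between bead shapes of $I$ and non-degenerate chains in $P_{i_0, i_m}$ with top $I$ and bottom $\{i_0, i_m\}$.

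Third — the key reduction — I would argue that the composition constraints on a simplicial functor $S \colon \CC[\Delta^n] \to \K$ force its values on chains whose bottom element $T_r$ properly exceeds $\{i_0, i_m\}$ to be determined by chains living in smaller hom-spaces. Concretely, if $T_r = \{i_0 = t_0 < t_1 < \cdots < t_\ell = i_m\}$ with $\ell \geq 2$, then $T_r$ is the union $\{t_0, t_1\} \cup \{t_1, t_2\} \cup \cdots \cup \{t_{\ell-1}, t_\ell\}$, and any strict chain above $T_r$ decomposes compatibly into strict chains in each $P_{t_s, t_{s+1}}$; composition in $\CC[\Delta^n]$ then forces $S$ on the original chain to be the composition in $\K$ of its restrictions. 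Hence the free data consists precisely of the simplices assigned to chains with bottom $\{i_0, i_m\}$ — i.e.\ one simplex per bead shape, as in the statement.

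Finally, the boundary compatibility appears automatically: the face maps of $\N(P_{i_0,i_m})$ either delete one of the intermediate $T_k$ (yielding a chain that corresponds to merging two adjacent $I_k$'s into a single one, a lower-dimensional bead shape) or collapse an edge, producing a degenerate simplex; either way the required coherence reduces to lower-dimensional bead-shape data. The main technical obstacle is verifying the decomposition in the third step rigorously — namely, that every chain in $P_{i_0, i_m}$ factors uniquely through the composition structure into a product of chains in the bead-level hom-spaces — since this is where the combinatorics of cube posets and the strictness of composition interact most delicately. Once that decomposition is in hand, the remaining verifications are bookkeeping on the simplicial identities, and the lemma follows as in \cite{riehl2017comprehension}*{\S 4.4}.
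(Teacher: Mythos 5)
The paper does not actually prove this lemma --- it is quoted verbatim from \cite{riehl2017comprehension}*{4.4.9} and used as a black box --- so your attempt should be compared against Riehl's argument rather than anything in the text. Your outline is essentially a reconstruction of that argument, phrased through Lurie's description of $\CC[\Delta^n](i,j)$ as the nerve of the poset of subsets of $\{i,\dots,j\}$ containing the endpoints. The bijection you set up between $r$-dimensional bead shapes of $I$ and nondegenerate chains with endpoints $\{i_0,i_m\}$ and $I$ is correct, and your third step is exactly Riehl's point that $\CC[\Delta^n]$ is a \emph{simplicial computad}: every nondegenerate simplex of a hom-space factors uniquely (up to inserting degeneracies) as a composite of \emph{atomic} simplices, namely those whose vertex at the $\{i_0,i_m\}$-end is the minimal element, and a simplicial functor out of a computad is freely determined by its values on atomic simplices. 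You correctly identify this unique factorization as the crux and defer it; that is the one genuinely unproven step in your write-up, but it is a true statement and is precisely what \cite{riehl2017comprehension}*{\S 4.2--4.4} establishes.

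Three smaller points to tighten. First, your conventions are internally inconsistent: under reverse inclusion $\{i,j\}$ is the \emph{maximum} of $P_{i,j}$, not the minimum, and your chains run opposite to the orientation used in the paper's Appendix (where $S_{\bead{I_0}}$ is the \emph{first} vertex); fix one convention and stick to it. Second, when you decompose a chain over the partition of $T_r=\{t_0<\dots<t_\ell\}$ into the intervals $[t_s,t_{s+1}]$, the component chains $T_k\cap[t_s,t_{s+1}]$ are generally only \emph{weakly} decreasing --- some components are degenerate --- so ``decomposes into strict chains'' is false as stated, though harmless since values on degenerate simplices are determined. Third, your account of boundary compatibility omits the face that deletes the vertex $\{i_0,i_m\}$ itself: that face is a non-atomic chain and must be read as a composite of lower-dimensional atomic data (this is the edge $S_{\bead{036|125}}=S_{\bead{36|5}}S_{\bead{03|12}}$ in the paper's example), not as a bead shape or a degeneracy. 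None of these breaks the argument, but the second and third are exactly where the ``bookkeeping'' is least trivial.
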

	The main benefit of this description is that \emph{no further coherence conditions} need to be checked.
	Instead of describing what it means for the boundary to compatible with lower-dimensional data, which can be found in \cite{riehl2017comprehension}, we illustrate this with an example.
	But first, we introduce the abbreviation
	\[
		S_\bead{i_0i_1 \dots i_m} := S_\bead{i_{m-1}i_m}S_\bead{i_{m-2}i_{m-1}}\dots S_\bead{i_1 i_2} S_\bead{i_0 i_1}.
	\]

	\begin{example}
		The bead shape in Example \ref{eg:bead} is $2$-dimensional, so $S_\bead{I_0|I_1|I_2} = S_{\langle 06 | 3 | 125  \rangle}$ should be a $2$-simplex in $\K(S_0, S_6)$.
		The boundary of this $2$-simplex is compatible with lower-dimensional data in the sense that it is given by the following:
		\begin{itemize}
			\item The first vertex is always $S_\bead{I_0}$, which in this case is $S_\bead{06} \in \K(S_0, S_6)_0$.
			\item The last vertex is always $S_\bead{I}$, which in this case is $S_\bead{012356}$.
			Between the first and last vertex, we have
			\[
				S_\bead{06} \xrightarrow{\quad S_\bead{06|1235} \quad} S_\bead{012356} \quad \quad \in \K(S_0,S_6)_1,
			\]
			representing the insertion of $I_1 \cup I_2 \cup \dots \cup I_r$ into $I_0$.
			This is always the starting edge of $S_\bead{I_0|\dots|I_r}$.
			\item The remaining vertices and edges are generated by first inserting $I_1$ into $I_0$, then $I_2$ into $I_0 \cup I_1$ and so on, up to inserting $I_r$ into $I\setminus I_r$.
			\item In our case, we first insert $I_1 = \{3\}$ into $I_0$. 
			This yields the vertex $S_\bead{I_0 \cup I_1} =S_\bead{036} = S_\bead{36}S_\bead{03}$ and the edge
			\[
				S_\bead{06} \xrightarrow{\quad S_\bead{06|3} \quad} S_\bead{036} \quad \quad \in \K(S_0,S_6)_1.
			\]
			\item Next, we insert $I_2 = \{1,2,5\}$ into $I_0 \cup I_1$.
			Since this gives all of $I$ and we already have $S_\bead{I}$, we do not need to add any more vertices.
			We only add the edge
			\[
				S_\bead{036} \xrightarrow{\quad S_\bead{36|5} S_\bead{03|12} \quad} S_\bead{01235}\quad \quad \in \K(S_0,S_6)_1,
			\]
			where $S_\bead{36|5} \in \K(S_3,S_5)_1$ and $S_\bead{03|12} \in \K(S_0, S_3)_1$.
			Note that $5$, lying between $3$ and $6$, goes into $S_\bead{36}$, as indicated by $S_\bead{36|5}$; similarly, $1$ and $2$ go into $S_\bead{03}$, as indicated by $S_\bead{03|12}$. 
			We denote this composite
			\[
				S_\bead{036|125} := S_\bead{36|5} S_\bead{03|12}.
			\]
			\item We can then choose $S_\bead{06|3|125}$ to be \emph{any} $2$-simplex in $\K(S_0,S_6)$ fitting into the following:
			\[
				\begin{tikzcd}[row sep = huge]
					S_\bead{06} \ar[rr, "S_\bead{06|1235}", ""{name=U, below}] \ar[dr, "S_\bead{06|3}"'] & & S_\bead{012356} 
					\\
					& S_\bead{036} \ar[ur, "S_\bead{036|125}"'] \ar[from = U, Rightarrow, shorten >= 1em, "S_\bead{06|3|125}" near start]& 
				\end{tikzcd}
			\]
		\end{itemize}
	\end{example}

	\begin{remark}
		The rule that $I_0$ must have exactly $2$ elements in Definition \ref{def:bead} allows us to distinguish bead shapes from abbreviations. 
		For instance, $S_\bead{06|3}$ arises from a bead shape, while $S_\bead{036|125}$ is an abbreviation.

		Note that we \emph{should not} abbreviate the composite $S_\bead{036|125} S_\bead{06|3}$ as $S_\bead{06|1235}$, since the latter implies that we insert $\{1,2,3,5\}$ all at once into $\{0,6\}$. 
		Indeed, the point of $S_\bead{06|3|125}$ is to relate $S_\bead{036|125} S_\bead{06|3}$ and $S_\bead{06|1235}$.

		We only abbreviate $S_\bead{j_0 \dots j_\ell |\dots} S_\bead{i_0 \dots i_k|\dots}$ as $S_\bead{i_0 \dots i_k j_1 \dots j_\ell | \dots}$ if $i_k = j_0$.
		The upshot is that \emph{there is an entirely unambiguous process} of converting an abbreviation into a composite of bead shapes, and \emph{not all compsites} of bead shapes may be abbreviated.
		See \cite{riehl2017comprehension}*{4.2.4} for details.
	\end{remark}

\end{document}